\theoremstyle{plain}
\newtheorem{theorem}{Theorem}[section]
\newtheorem{lemma}{Lemma}[section]
\newtheorem{proposition}{Proposition}[section]
\newtheorem{corollary}{Corollary}[section]
\theoremstyle{definition}
\newtheorem{definition}{Definition}[section]
\theoremstyle{remark}
\newtheorem{remark}{Remark}[section]
\numberwithin{equation}{section}
\newcommand{\R}{\mathbb{R}}
\newcommand{\HH}{\mathbb{H}}
\newcommand{\eps}{\varepsilon}
\newcommand{\ke}{\text{ker }}
\newcommand{\wt}{\widetilde}
\newcommand{\be}{\begin{equation}}
\newcommand{\ee}{\end{equation}}
\DeclarePairedDelimiter\floor{\lfloor}{\rfloor}
\title
[Local marked boundary rigidity under hyperbolic trapping assumptions]
{Local marked boundary rigidity under hyperbolic trapping assumptions}
\author{Thibault Lefeuvre}
\address{Laboratoire de Mathématiques d’Orsay, Univ. Paris-Sud, CNRS, Université Paris-Saclay, 91405 Orsay, France}
\email{thibault.lefeuvre@u-psud.fr}
\begin{document}

\begin{abstract}
Under the assumption that the X-ray transform over symmetric solenoidal $2$-tensors is injective, we prove that smooth compact connected manifolds with strictly convex boundary, no conjugate points and a hyperbolic trapped set are \textit{locally marked boundary rigid}. 
\end{abstract}

\maketitle

Throughout this paper, we shall work in the smooth category, that is all the manifolds and coordinate charts are considered to be smooth.

\section{Introduction}

Given $(M,g)$ a compact manifold with boundary of dimension $n \geq 2$, the \textit{marked boundary distance} is defined as the map
\[ d_g : \left\{ (x,y,[\gamma]), (x,y) \in \partial M \times \partial M, [\gamma] \in \mathcal{P}_{x,y} \right\} \rightarrow \R_+ \]
which associates to $x$ and $y$ on the boundary and a homotopy class
\[ [\gamma]  \in \mathcal{P}_{x,y}:=\left\{ [\gamma], \gamma \text{ is a curve joining } x \text{ to } y \right\},\] the distance between $x$ and $y$ computed as the infimum over the piecewise $\mathcal{C}^1$-curves joining $x$ to $y$ in the homotopy class of $[\gamma]$. This map generalizes the classical notion of \textit{boundary distance} to the case of a manifold with topology. It can be seen as an analogue of the \textit{marked length spectrum} in the case of a closed Riemannian manifold, studied for instance in the celebrated articles of Otal \cite{Otal-90} and Croke \cite{Croke-90}.

In the case of a manifold with strictly convex boundary and no conjugate points (which we will consider throughout this paper), there exists a unique geodesic in each homotopy class of curves joining $x$ to $y$ which realizes the distance (see \cite[Lemma 2.2]{Guillarmou-Mazzucchelli-18}). As a consequence, given $[\gamma] \in \mathcal{P}_{x,y}$, $d_g(x,y,[\gamma])$ is nothing but the length of this unique geodesic in the class $[\gamma]$. Given $g'$, another metric with strictly convex boundary and no conjugate points, we will say that their marked boundary distance \textit{agree} if $d_g = d_{g'}$. Note that one can also lift this distance to the universal cover $\wt{M}$ of $M$. Then, there exists a unique geodesic joining any pair of points on the boundary of $\wt{M}$ and the marked boundary distances agree if and only if the two boundary distances $d_{\wt{g}}$ and $d_{\wt{g}'}$ agree.

A classical conjecture in Riemannian geometry is that, under suitable assumptions on the metric, this marked boundary distance determines the metric up to a natural obstruction, in the sense that if $g'$ is another metric with same marked boundary distance function, then there exists a diffeomorphism $\phi : M \rightarrow M$ such that $\phi|_{\partial M} = \text{id}$ and $\phi^*g' = g$. When this occurs, we say that $(M,g)$ is \textit{marked boundary rigid}.

In the case of a \textit{simple} manifold, i.e. a manifold with strictly convex boundary and such that the exponential map is a diffeomorphism at all points (such manifolds are topological balls without trapping and conjugate points), this conjecture was first stated by Michel \cite{Michel-81} in 1981, and later proved by Pestov-Uhlmann \cite{Pestov-Uhlmann-05} in 2002, in the two-dimensional case. It is still an open question in higher dimensions but Stefanov-Uhlmann-Vasy \cite{Stefanov-Uhlmann-Vasy-17} proved the rigidity of a wide range of simple (and also non-simple actually) manifolds satisfying a foliation assumption.

There is actually a long history of results regarding the boundary rigidity question on simple manifolds since the seminal work of Michel. Let us mention the contributions of Gromov \cite{Gromov-83}, for regions of $\R^n$, the original paper of Michel \cite{Michel-81} for subdomains of the open hemisphere and the Besson-Courtois-Gallot theorem \cite{Besson-Courtois-Gallot-95}, which implies the boundary rigidity for regions of $\HH^n$ (see also the survey of Croke \cite{Croke-04}). Still in the simple setting, the \textit{local boundary rigidity} was studied by Croke-Dairbekov-Sharafutdinov in \cite{Croke-Dairbekov-Sharafutdinov-00}, by Stefanov-Uhlmann in \cite{Stefanov-Uhlmann-04} and positive results were obtained. More recently, Burago-Ivanov \cite{Burago-Ivanov-10} proved the local boundary rigidity for metrics close enough to the euclidean metric. But very few papers deal with manifolds with trapping. In that case, the first general results where obtained by Guillarmou-Mazzucchelli \cite{Guillarmou-Mazzucchelli-18} for surfaces, where the local marked boundary rigidity was established under suitable assumptions. One of the main results of this paper is the following marked boundary rigidity result for manifolds of negative curvature, which is a local version of Michel's conjecture.

\begin{theorem}
\label{th0}
Let $(M,g)$ be a compact connected $n$-dimensional manifold with strictly convex boundary and negative curvature. We set $N := \floor*{\frac{n+1}{2}}+1$. Then $(M,g)$ is locally marked boundary rigid in the sense that: for any $\alpha > 0$ arbitrarily small, there exists $\eps > 0$ such that for any metric $g'$ with same marked boundary distance as $g$ and such that $\|g' - g\|_{\mathcal{C}^{N,\alpha}} < \eps$, there exists a smooth diffeomorphism $\phi : M \rightarrow M$, such that $\phi|_{\partial M} = \text{id}$ and $\phi^*g' = g$.
\end{theorem}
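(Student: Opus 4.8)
The plan is to run the standard boundary-rigidity pipeline --- diffeomorphism gauge fixing, linearization through the geodesic X-ray transform, and a stability estimate --- but in a form that accommodates the trapped set through the microlocal analysis of the geodesic flow on manifolds with hyperbolic trapping. First I would record the geometric input: strict convexity of $\partial M$ together with negative curvature rules out conjugate points and forces the trapped set of the geodesic flow to be hyperbolic, which places us exactly in the regime where the geodesic X-ray transform $I_2$ on symmetric solenoidal $2$-tensors is injective and, crucially, enjoys a stability estimate. I would then fix the gauge: given $g'$ close to $g$, I construct a diffeomorphism $\phi$ with $\phi|_{\partial M} = \mathrm{id}$ such that the tensor $f := \phi^* g' - g$ is \emph{solenoidal}, i.e. $D^* f = 0$ where $D = \sigma\nabla$ is the symmetrized covariant derivative and $D^*$ its formal adjoint (the divergence). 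This reduces the theorem to proving $f = 0$; the bound $\|g'-g\|_{\mathcal{C}^{N,\alpha}} < \eps$ transfers to smallness of $f$ in a comparable norm, with $\phi$ close to the identity.

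Next I would linearize the marked boundary distance. Along each non-trapped geodesic $\gamma$ joining two boundary points, the first variation of its length in the direction of $f$ is $\tfrac12\int_\gamma f(\dot\gamma,\dot\gamma)\,dt = I_2 f(\gamma)$, so equality of the marked boundary distances of $g$ and $\phi^* g'$ forces this transform to vanish to first order. To promote this to the nonlinear level I would use a pseudolinearization identity in the spirit of Stefanov--Uhlmann \cite{Stefanov-Uhlmann-04}: comparing the geodesic Hamiltonian flows of $g$ and of $\phi^* g'$ and integrating the difference of their generators along the common geodesics, the equal-distance hypothesis annihilates the boundary contributions and yields an identity of the schematic form $I_2^{\,g} f = Q(f)$, where the remainder $Q(f)$ is controlled quadratically by $f$ and its first derivatives. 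The delicate point already appears here: as geodesics approach the trapped set their length is unbounded, so $Q(f)$ must be estimated using the exponential decay of correlations / spectral gap of the Anosov geodesic flow rather than a naive length bound.

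The heart of the argument is the stability estimate. Invoking injectivity of $I_2$ on solenoidal tensors in the hyperbolically trapped setting, together with the fact that the normal operator $\Pi := I_2^* I_2$ is, modulo a smoothing term and a finite-rank contribution from the Pollicott--Ruelle resonances, an elliptic pseudodifferential operator of order $-1$ on the solenoidal subspace, I would derive a bound $\|f\|_{H^{s}} \lesssim \|I_2 f\|_{\ast} + \|f\|_{H^{s-1}}$ and bootstrap it, using $D^* f = 0$, to $\|f\|_{H^{s}} \lesssim \|I_2 f\|_{\ast}$. Feeding in $I_2 f = Q(f)$ and the quadratic control of $Q$ yields $\|f\| \lesssim \|f\|^2$ in the relevant norms; the threshold $N = \floor*{\frac{n+1}{2}}+1$ is precisely what makes the Sobolev space in which stability holds embed into $\mathcal{C}^1$, so that the nonlinear estimate is self-consistent. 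For $\eps$ small this forces $f = 0$, hence $\phi^* g' = g$, as claimed.

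I expect the main obstacle to be the stability estimate itself in the presence of trapping. Unlike the simple case, the normal operator $\Pi$ is no longer globally elliptic: one must invert it on anisotropic Sobolev spaces adapted to the hyperbolic dynamics, absorb the resonant states associated with the trapped set, and establish the requisite lower bound on the solenoidal subspace. Interfacing this global microlocal analysis with the local boundary gauge --- so that all constants stay uniform up to $\partial M$ and across the family of arbitrarily long, nearly-trapped geodesics --- is where the principal technical difficulty lies.
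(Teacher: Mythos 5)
Your skeleton (solenoidal gauge fixing, linearization through $I_2$, stability estimate for the normal operator, contraction for small $f$) matches the paper's architecture, and your reduction of the negative-curvature statement to $s$-injectivity of $I_2$ is exactly how the paper deduces Theorem \ref{th0} from Theorem \ref{th1}. But the step where you pass from equality of marked boundary distances to control of $I_2 f$ is a genuine gap, and it is precisely the step the paper identifies as the crux. You propose a Stefanov--Uhlmann-type pseudolinearization $I_2^{\,g} f = Q(f)$ with $Q$ quadratically controlled along each geodesic. In the trapping setting this fails: geodesics arbitrarily close to the trapped set have unbounded length, $I_2 f$ is in general not even in $L^\infty(\partial_- SM)$, and the pseudolinearization remainder compares two nearby flows over unbounded times, where the discrepancy grows (typically exponentially) with time. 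The paper's own remark after Lemma \ref{lem:est1} states explicitly that the pointwise estimate $\|I_2 f\|_{L^\infty} \lesssim \|f\|^2_{\mathcal{C}^1}$ is ``hopeless'' here. Your suggested repair via ``decay of correlations / spectral gap'' does not address the difficulty, since the issue is not mixing of a single flow but the divergence of two perturbed flows along individual near-trapped orbits; no workable estimate of this kind is known.

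What the paper does instead is an \emph{integrated} quadratic estimate that never requires pointwise control on long geodesics. First, lifting to the universal cover and using concavity of the energy $E(\tau)$ of the $g_\tau = g + \tau f$ geodesics with fixed endpoints (the Croke--Dairbekov--Sharafutdinov argument), equality of the marked boundary distances gives $E(0)=E(1)$, hence $E'(0) = I_2(f) \geq 0$ almost everywhere --- an a.e.\ statement which suffices because $\Gamma_-\cap \partial_- SM$ has measure zero for hyperbolic trapping. Second, equality of marked boundary distances forces $\mathrm{vol}(g)=\mathrm{vol}(g')$ (via the lens data and Santal\'o's formula), and a pointwise expansion of $\sqrt{\det g_\tau}$ combined with Santal\'o's formula and the positivity of $I_2 f$ yields $\|I_2 f\|_{L^1(\partial_- SM)} \lesssim \|f\|^2_{L^2}$ (Lemma \ref{lem:est1}). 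This $L^1/L^2$ bound is the substitute for your $Q(f)$, and the rest of the proof is engineered around it: the stability estimate is run in negative Sobolev norms on an extension $M_e$, $\|f\|_{H^{-s-1/2}} \lesssim \|\Pi^e_2 E_0 f\|_{H^{-s+1/2}}$, and the $L^p$ mapping properties of $I$, $I^*$ together with interpolation convert the $L^1$ quadratic bound into $\|f\|_{H^{-s-1/2}} \lesssim \|f\|_{H^{-s-1/2}}^{2\gamma\theta}\,\eps^{\,\text{positive power}}$ with $2\gamma\theta > 1$. Incidentally, this also corrects your account of the threshold $N = \lfloor\frac{n+1}{2}\rfloor + 1$: it is not about embedding a stability space into $\mathcal{C}^1$, but about making the interpolation exponent $2\gamma\theta$ exceed $1$ (one needs $N > \frac{n+1}{2}$ because the Sobolev-embedding index $s$ is essentially $n/2$), which is what allows the superlinear self-bound to force $f=0$.
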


We actually prove a refined version of this result, which is detailed in \S\ref{ssect:main}. We stress that the marked boundary distance is the natural object to consider insofar as one can construct examples of surfaces satisfying the assumptions of Theorem \ref{th0} with same boundary distance but different marked boundary distances which are not isometric. Indeed, consider a negatively-curved surface $(M,g)$ whose strictly convex boundary has a single component. We can always choose such a surface so that the distance between two points on the boundary is realized by minimizing geodesics which only visit a neighborhood of this boundary. Thus, any small perturbation of the metric away from the boundary will still provide the same boundary distance function but the metrics will no longer be isometric.

Let us eventually mention that the problem of boundary rigidity is closely related to the \textit{lens rigidity} question, that is the reconstruction of the metric $g$ from the knowledge of the scattering map and the exit time function. This question has been extensively studied in the literature. Among other contributions, let us mention that of Stefanov-Uhlmann \cite{Stefanov-Uhlmann-09}, who prove a \textit{local lens rigidity} result on a non-simple manifold (without the assumption on convexity and with a possible trapped set), which is somehow in the spirit of our article. 

Our proof can be interpreted as a non-trivial inverse function theorem, like in \cite{Croke-Dairbekov-Sharafutdinov-00} or \cite{Stefanov-Uhlmann-04}. Indeed, it can be easily showed that the linearized version of the marked boundary distance problem is equivalent to the injectivity of the X-ray transform $I_2$. The problem here is non-linear, but still local, which allows us to recover some of the features of the linearized problem. The key argument here is a quadratic control of the X-ray transform of the difference of the two metrics $f:=g'-g$ (see Lemma \ref{lem:est1}). We do not choose a normal gauge to make the metrics coincide on the boundary but rather impose a solenoidal gauge (this is made possible thanks to an essential lemma in \cite{Croke-Dairbekov-Sharafutdinov-00}). We stress the fact that this paper partly relies on the study of the X-ray transform carried out in \cite{Lefeuvre-18-1}, which allows a finer control on the regularity of the distributions which are at stake in the last paragraph. This is crucial to apply interpolation estimates to conclude in the end. This former article itself strongly relies on the technical tools introduced in both papers of Guillarmou \cite{Guillarmou-17-1} and \cite{Guillarmou-17-2}, which are based on recent and powerful analytical techniques developed in the framework of hyperbolic dynamical systems (see for instance Dyatlov-Guillarmou \cite{Dyatlov-Guillarmou-16}, Dyatlov-Zworski \cite{Dyatlov-Zworski-16}, Faure-Sjöstrand \cite{Faure-Sjostrand-11}).

\subsection{Preliminaries}

\label{ss:intro}

Let us consider $(M,g)$, a compact connected Riemannian manifold with strictly convex boundary and no conjugate points. We denote by $SM$ its unit tangent bundle, that is
\[ SM = \left\{ (x,v) \in TM, |v|_x = 1 \right\}, \]
and by $\pi_0 : SM \rightarrow M$ the canonical projection. The Liouville measure on $SM$ will be denoted by $d\mu$. The incoming (-) and outcoming (+) boundaries of the unit tangent bundle of $M$ are defined by
\[ \partial_\pm SM = \left\{ (x,v) \in TM, x \in \partial M, |v|_{x} = 1, \mp g_x(v,\nu) < 0 \right\}, \]
where $\nu$ is the outward pointing unit normal vector field to $\partial M$. Note in particular that
\[ S(\partial M) = \overline{\partial_+ SM} \cap \overline{\partial_- SM} \]
If $i : \partial SM \rightarrow SM$ is the embedding of $\partial SM$ into $SM$, we define the measure $d\mu_\nu$ on the boundary $\partial SM$ by
\be d\mu_\nu(x,v) := |g_x(v,\nu)| i^* d\mu (x,v) \ee

$\varphi_t$ denotes the (incomplete) geodesic flow on $SM$ and $X$ the vector field induced on $T(SM)$ by $\varphi_t$. Given each point $(x,v) \in SM$, we define the escape time in positive (+) and negative (-) times by:
\be \begin{array}{c} l_+(x,v) := \sup \left\{ t \geq 0, \varphi_t (x,v) \in SM \right\} \in [0, + \infty] \\
l_-(x,v) := \inf \left\{ t \leq 0, \varphi_t (x,v) \in SM \right\} \in [-\infty, 0] 
\end{array} \ee
We say that a point $(x,v)$ is \textit{trapped in the future} (resp. \textit{in the past}) if $l_+(x,v) = + \infty$ (resp. $l_-(x,v) = -\infty$).

\begin{definition}
The incoming (-) and outcoming (+) tails in $SM$ are defined by:
\[ \Gamma_\mp := \left\{ (x,v) \in SM, l_\pm(x,v) = \pm \infty \right\} \]
They consist of the sets of points which are respectively trapped in the future or the past. The trapped set $K$ for the geodesic flow on $SM$ is defined by:
\be K := \Gamma_+ \cap \Gamma_- = \cap_{t \in \R} \varphi_t(SM) \ee
It consists of the set of points which are both trapped in the future and the past.
\end{definition}

These sets are closed in $SM$ and invariant by the geodesic flow. A manifold is said to be \textit{non-trapping} if $K = \emptyset$. The aim of the present article is precisely to bring new results in the case $K \neq \emptyset$. We also assume that $K$ is hyperbolic, that is there exist some constants $C > 0$ and $\nu > 0$ such that for all $z = (x,v) \in K$, there is a continuous flow-invariant splitting
\be \label{eq:split} T_z(SM) = \R X(z) \oplus E_u(z) \oplus E_s(z), \ee
where $E_s(z)$ (resp. $E_u(z)$) is the \textit{stable} (resp. \textit{unstable}) vector space in $z$, which satisfy
\be \begin{array}{c} |d\varphi_t(z) \cdot \xi|_{\varphi_t(z)} \leq C e^{-\nu t} |\xi|_{z}, ~~ \forall t > 0, \xi \in E_s(z) \\
|d\varphi_t(z) \cdot \xi|_{\varphi_t(z)} \leq C e^{-\nu |t|} |\xi|_{z}, ~~ \forall t < 0, \xi \in E_u(z)\end{array} \ee
The norm, here, is given in terms of the Sasaki metric.

In particular, when $K$ is hyperbolic, the following properties hold (see \cite[Proposition 2.4]{Guillarmou-17-2}):
\begin{proposition}
\label{prop:hyp}
\begin{enumerate}
\item $\mu(\Gamma_- \cup \Gamma_+) = 0$,
\item $\tilde{\mu}(\Gamma_\pm \cap \partial_\pm SM) = 0$, where $\tilde{\mu}$ is the measure on $\partial SM$ induced by the Sasaki metric.
\end{enumerate}
\end{proposition}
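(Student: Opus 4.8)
The plan is to derive both statements from the hyperbolicity of $K$ and the $\varphi_t$-invariance of the Liouville measure $\mu$, reducing the boundary statement (2) to the interior statement (1) via Santaló's formula. Since the flip $(x,v)\mapsto(x,-v)$ preserves $\mu$ and $\tilde{\mu}$, exchanges $\Gamma_+$ with $\Gamma_-$, and exchanges $\partial_+SM$ with $\partial_-SM$, it suffices throughout to treat the future-trapped set $\Gamma_-=\{l_+=+\infty\}$.

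First I would identify $\Gamma_-$ with the stable set of $K$. Let $z\in\Gamma_-$. Its forward orbit is defined for all $t\ge0$, and since $SM$ is compact its $\omega$-limit set $\omega(z)$ is nonempty, compact and flow-invariant; as every point of $\omega(z)$ then carries a bi-infinite orbit contained in $SM$, we get $\omega(z)\subseteq\bigcap_{t\in\R}\varphi_t(SM)=K$. Thus the forward orbit of $z$ eventually enters and remains in any prescribed neighbourhood of the hyperbolic set $K$, and the stable manifold theorem for hyperbolic sets places $z$ on a (weak) stable manifold of $K$; conversely such points are future-trapped. Hence $\Gamma_-=W^s(K)=\bigcup_{m\in\N}\varphi_{-m}\big(W^s_{\mathrm{loc}}(K)\big)$, where $W^s_{\mathrm{loc}}(K)$ denotes the local stable set and each $\varphi_{-m}$ is a measure-preserving diffeomorphism onto its image.

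The crux is then to show $\mu\big(W^s_{\mathrm{loc}}(K)\big)=0$, after which the countable union above is $\mu$-null and (1) follows. I would first record that $\mu(K)=0$: this is the standard fact that a compact hyperbolic set which is not all of $SM$ has zero volume, a consequence of the absolute continuity of the stable and unstable foliations together with the invariance of $\mu$ (a hyperbolic set of positive volume would, modulo a null set, be open and hence force $K=SM$, impossible since $\partial M\neq\emptyset$). Working in finitely many flow boxes adapted to the splitting \eqref{eq:split}, the local product structure of $K$ lets me write $W^s_{\mathrm{loc}}(K)$, up to the flow direction, as the product of the transverse (unstable) trace of $K$ with a full local stable disk; since $\mu(K)=0$ forces this transverse trace to be null, an application of Fubini gives $\mu\big(W^s_{\mathrm{loc}}(K)\big)=0$. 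I expect this measure-theoretic step to be the main obstacle: it is where hyperbolicity is genuinely used, where the absolute-continuity input enters, and where one must control the behaviour of the incomplete flow in a neighbourhood of $K$.

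Finally, for (2) I would argue by contradiction using Santaló's formula $d\mu=d\mu_\nu\,dt$. The glancing set $S(\partial M)=\{(x,v)\in\partial SM: g_x(v,\nu)=0\}$ is a hypersurface in $\partial SM$, hence $\tilde{\mu}$-null, and off it the identity $d\mu_\nu=|g_x(v,\nu)|\,i^*d\mu$ shows that $\mu_\nu$ and $\tilde{\mu}$ have the same null sets; so it is enough to prove $\mu_\nu(\Gamma_-\cap\partial_-SM)=0$. If $A:=\Gamma_-\cap\partial_-SM$ had positive $\mu_\nu$-measure, then, its points pointing inward, the set $\bigcup_{0<t<\eps}\varphi_t(A)$ would have positive Liouville measure by Santaló's formula; but flowing a future-trapped point forward keeps it future-trapped, so this set lies in $\Gamma_-$, contradicting $\mu(\Gamma_-)=0$. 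Time reversal yields the corresponding statement for $\Gamma_+\cap\partial_+SM$, which completes the proof.
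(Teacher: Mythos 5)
Your reduction of (2) to (1) via Santal\'o is correct, and the identification $\Gamma_-=W^s(K)$ is legitimate (it uses that $K$ is locally maximal, being the maximal invariant set of $SM$, plus shadowing). The genuine gap is exactly the step you flagged as the crux, and your proposed resolution of it fails: the inference ``$\mu(K)=0$ forces the transverse (unstable) trace of $K$ to be null'' is a non sequitur. In a local product chart, absolute continuity of the invariant laminations gives $\mu(K\cap \mathrm{box})\asymp |K\cap W^s_{\mathrm{loc}}|\cdot|K\cap W^u_{\mathrm{loc}}|$, so $\mu(K)=0$ only says that \emph{at least one} of the two traces is Lebesgue-null --- possibly the stable one, with the unstable trace of full measure. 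This is not a technicality: it is precisely what happens for an Axiom~A attractor (a solenoid, say), which has zero volume, full unstable trace, and a local stable set of \emph{positive} volume. So zero volume of a hyperbolic set does not imply zero volume of its stable set, and your Fubini step collapses. What rules out the attractor scenario here is not $\mu(K)=0$ but the invariance of the finite Liouville measure, which you invoke for $\mu(K)=0$ and then never again: a volume-preserving flow admits no proper attractors (if $U$ were a trapping neighbourhood with $\bigcap_{t\geq 0}\varphi_t(U)=K$, invariance would give $\mu(U)=\lim_t \mu(\varphi_t(U))=\mu(K)$, making the nonempty open set $U\setminus K$ null, absurd), and then Bowen--Ruelle's theorem (the stable set of a basic set which is not an attractor has zero volume) yields $\mu(W^s(K))=0$. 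This Bowen--Ruelle input is what the proof behind the paper's reference \cite[Proposition 2.4]{Guillarmou-17-2} rests on; note the paper itself only cites that result rather than proving it.

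There is also a cleaner repair that makes most of your machinery unnecessary: run the Santal\'o argument \emph{before} the interior statement rather than after. The change of variables $(y,t)\mapsto\varphi_t(y)$, $y\in\partial_-SM$, $0\leq t<l_+(y)$, parametrizes $SM\setminus\Gamma_+$ up to a $\mu$-null set, and since $\mathbf{1}_{\Gamma_-}(\varphi_t(y))=\mathbf{1}_{\Gamma_-}(y)$,
\begin{equation*}
\mu(\Gamma_-\setminus\Gamma_+)\;=\;\int_{\partial_-SM}\mathbf{1}_{\Gamma_-}(y)\,l_+(y)\,d\mu_\nu(y)\;=\;(+\infty)\cdot\mu_\nu\bigl(\Gamma_-\cap\partial_-SM\bigr).
\end{equation*}
The left-hand side is at most $\mu(SM)<\infty$, which forces both $\mu_\nu(\Gamma_-\cap\partial_-SM)=0$ --- statement (2), after your (correct) observation that $\tilde\mu$ and $\mu_\nu$ have the same null sets on $\partial_-SM$ --- and $\mu(\Gamma_-\setminus\Gamma_+)=0$, with no hyperbolicity whatsoever. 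What then remains of (1) is exactly $\mu(K)=\mu(\Gamma_-\cap\Gamma_+)=0$, which is the only place the hyperbolic structure enters; your sketch of that point (density point plus local maximality plus absolute continuity, then openness forces $K=SM$, impossible since geodesics near $\partial_+SM$ escape) is essentially the right argument. With this reorganization the stable manifold theorem, shadowing, and the flawed Fubini step disappear entirely.
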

Note that usually, $K$ has Hausdorff dimension $\text{dim}_H(K) \in [1,2n-1)$. \\

It is convenient to embed the manifold $M$ into a strictly larger manifold $M_e$, such that $M_e$ satisfies the same properties : it is smooth, has strictly convex boundary and no conjugate points (see \cite{Guillarmou-17-2}, Section 2.1 and Section 2.3). Moreover, this can be done so that the longest connected geodesic ray in $SM_e \setminus SM^\circ$ has its length bounded by some constant $L < + \infty$. As a consequence, the trapped set of $M_e$ is the same as the trapped set of $M$ and the sets $\Gamma_\pm$ are naturally extended to $SM_e$. In the following, for $t \in \R$, $\varphi_t$ will actually denote the extension of $\varphi_t|_{SM}$ to $SM_e$.

\begin{figure}[h!]
\begin{center}

\includegraphics[scale=0.9]{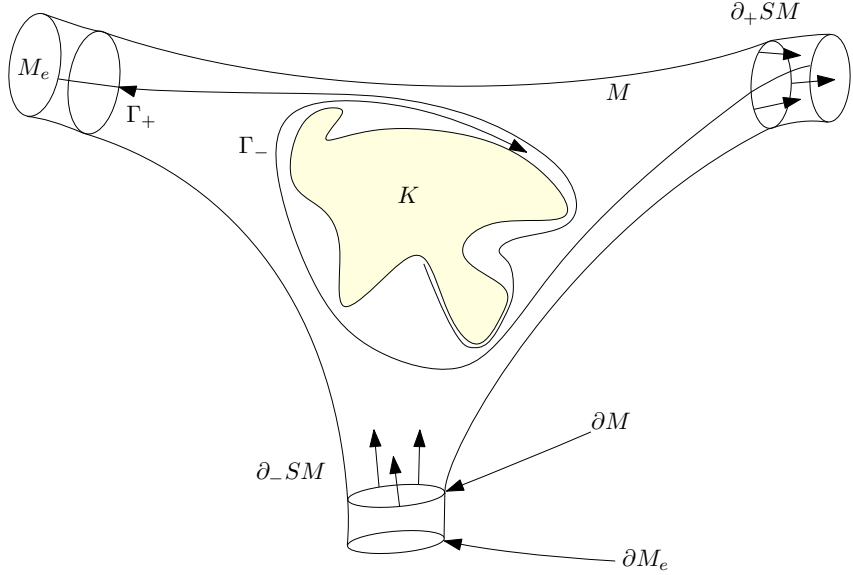} 
\caption{The manifold $M$ embedded in $M_e$}

\end{center}
\end{figure}

\subsection{The X-ray transform}

\label{ssect:xray}

We can now define the X-ray transform:

\begin{definition}
The X-ray transform is the map $I : \mathcal{C}_c^\infty(SM \setminus \Gamma_-) \rightarrow \mathcal{C}^\infty(\partial_-SM)$ defined by:
\[ If(x,v) := \int_0^{+ \infty} f(\varphi_t(x,v)) dt \]
\end{definition}
Note that since $f$ has compact support in the open set $SM \setminus \Gamma_-$, we know that the exit time of any $(x,v) \in SM \setminus \Gamma_-$ is uniformly bounded, so the integral is actually computed over a compact set. It is also natural to extend the action of $I$ on $L^p$-spaces (see \cite[Section 2]{Lefeuvre-18-1}) and one can prove for instance that for any $p > 2$, $I : L^p(SM,d\mu) \rightarrow L^2(\partial_-SM,d\mu_\nu)$ is bounded.

From the definition of $I$, we can define a formal adjoint $I^* : \mathcal{C}_c^\infty(\partial_-SM \setminus \Gamma_-) \rightarrow \mathcal{C}_c^\infty(SM \setminus \Gamma_- \cup \Gamma_+)$ to the X-ray transform by the formula
\be I^*u(x,v) = u(\varphi_{l_-(x,v)} (x,v))  \ee
for the $L^2$ inner scalar products induced by the Liouville measure $d\mu$ on $SM$ and by the measure $d\mu_\nu$ on $\partial_-(SM)$, that is $\langle If, u \rangle_{L^2(\partial_-SM, d\mu_\nu)} = \langle f , I^*u \rangle_{L^2(SM,d\mu)}$, for $f \in \mathcal{C}_c^\infty(SM \setminus \Gamma_-), u \in \mathcal{C}_c^\infty(\partial_-SM \setminus \Gamma_-)$. Note that it naturally extends to a bounded operator $I^* : L^2(\partial_- SM, d\mu_\nu) \rightarrow L^{p'}(SM)$, where $p'$ is the conjugate exponent to $p$ (such that $1/p + 1/p' = 1$).

From this definition of the X-ray transform on functions on $SM$, we can derive the definition of the X-ray transform for symmetric $m$-cotensors. Indeed, such tensors can be seen as functions on $SM$ via the identification map:
\[ \pi_m^* : \left| \begin{array}{l} \mathcal{C}^\infty(M, \otimes_S^m T^*M) \rightarrow \mathcal{C}^\infty(SM) \\ f \mapsto (\pi_m^* f) (x,v) = f(x)(\otimes^m v) \end{array} \right. \]
If $f$ is a (smooth) symmetric $m$-cotensor, its coordinate functions are defined (at least locally) by
\[ f_{i_1,...,i_m}(x) = f(x)(e_{i_1}(x), ..., e_{i_m}(x)), \]
for $1 \leq i_1, ..., i_m \leq n$, where $(e_1(x), ..., e_n(x))$ forms a local orthonormal basis of $T_xM$. The $L^p$-space, for $p \geq 1$, (resp. Sobolev space for $s \geq 0$) of symmetric $m$-cotensors thus consists of tensors whose coordinate functions are all in $L^p(M)$ (resp. $H^s(M)$). An equivalent way to define $H^s(M, \otimes^m_S T^*M)$ is to consider tensors $u$ such that $(1+\Delta)^{s/2} \in L^2(M, \otimes^m_S T^*M)$, where $\Delta = D^*D$ is the Dirichlet Laplacian\footnote{This is an elliptic differential operator with zero kernel and cokernel satisfying the Lopatinskii's transmission condition (see \cite[Theorem 3.3.2]{Sharafutdinov-94}). It can thus be used in order to define the scale of Sobolev spaces.} on $M$ (see below for a definition of $D$ and $D^*$). It is easy to check that $\pi_m^* : L^p(M, \otimes^m_S T^*M) \rightarrow L^p(SM)$ is bounded (resp. $\pi_m^* : H^s(M, \otimes^m_S T^*M) \rightarrow H^s(SM)$).

It also provides a dual operator acting on distributions
\[ {\pi_m}_* : \mathcal{C}^{-\infty}(SM^\circ) \rightarrow \mathcal{C}^{-\infty}(M^\circ, \otimes_S^m T^*M^\circ), \]
such that for $u \in \mathcal{C}^{-\infty}(SM^\circ), f \in \mathcal{C}^\infty(M, \otimes_S^m T^*M), \langle {\pi_m}_* u , f \rangle = \langle u , \pi_m^* f \rangle$, where the distribution pairing is given by the natural scalar product on the bundle $\otimes_S^m T^*M$ induced by the metric $g$, which is written in coordinates, for $f$ and $h$ smooth tensors:
\be \label{eq:ps} \langle f , h \rangle_g = \int_M f_{i_1 ... i_m} g^{i_1 j_1} ... g^{i_m j_m} h_{j_1 ... j_m} d\text{vol} \ee

\begin{definition}
Let $p > 2$ and $p'$ denote its dual exponent such that $1/p + 1/p'=1$. The X-ray transform for symmectric $m$-cotensors is defined by 
\be I_m := I \circ \pi_m^* : L^p(M, \otimes^m_S T^*M) \rightarrow L^2(\partial_-SM, d\mu_\nu) \ee
It is a bounded operator, as well as its adjoint
\be I_m^* = {\pi_m}_* \circ I^* : L^2(\partial_-SM, d\mu_\nu) \rightarrow  L^{p'}(M, \otimes^m_S T^*M) \ee
\end{definition}

Let us now explain the notion of \textit{solenoidal injectivity} of the X-ray transform. If $\nabla$ denotes the Levi-Civita connection and $\sigma : \otimes^{m+1} T^*M \rightarrow \otimes^{m+1}_S T^*M$ the symmetrization operation, we define the \textit{inner derivative} $D = \sigma \circ \nabla : \mathcal{C}^\infty(M, \otimes_S^m T^*M) \rightarrow \mathcal{C}^\infty(M, \otimes_S^{m+1} T^*M)$. The divergence of symmetric $m$-cotensors is its formal adjoint differential operator, given by $D^*f := -\text{tr}_{12}(\nabla f)$, where $\text{tr}_{12} : \mathcal{C}^\infty(M, \otimes_S^m T^*M) \rightarrow \mathcal{C}^\infty(M, \otimes_S^{m-2} T^*M)$ denotes the trace map defined by contracting with the Riemannian metric, namely
\[ \text{tr}_{12}(q)(v_1, ...,v_{m-2}) = \sum_{i=1}^n q(e_i,e_i,v_1,...,v_{m-2}), \]
if $(e_1,...e_n)$ is a local orthonormal basis of $TM$.

If $f \in H^s(M, \otimes^m_S T^*M)$ for some $s \geq 0$, there exists a unique decomposition of the tensor $f$ such that
\[ f = f^s + Dp, \qquad D^*f^s = 0, p|_{\partial M} = 0, \]
where $f^s \in H^s(M, \otimes_S^m T^*M), p \in H^{s+1}(M, \otimes_S^{m-1} T^*M)$ (see \cite[Theorem 3.3.2]{Sharafutdinov-94} for a proof of this result). $f^s$ is called the \textit{solenoidal part} of the tensor whereas $Dp$ is called the \textit{potential part}. Moreover, this decomposition extends to any distribution $f \in H^{-s}(M, \otimes^m_S T^*M)$, $s \geq 0$, as long as it has compact support within $M^\circ$ (see the arguments given in the proof of Lemma \ref{lem:ell} for instance). We will say that $I_m$ is injective over solenoidal tensors, or in short $s$-\textit{injective}, if it is injective when restricted to
\[ \mathcal{C}^\infty_{\text{sol}}(M,\otimes^m_S T^*M) := \mathcal{C}^\infty(M, \otimes_S^m T^*M) \cap \ker D^* \]

This definition stems from the fact that given $p \in \mathcal{C}^\infty(M, \otimes_S^{m-1} T^*M)$ such that $p|_{\partial M} = 0$, one always has $I_m(Dp) = 0$. Indeed $X \pi_m^* = \pi_{m+1}^* D$ and the conclusion is then immediate by the Fundamental Theorem of Calculus. Thus it is morally impossible to recover the potential part of a tensor $f$ without knowing more, except the fact that it lies in the kernel of $I_m$.

\begin{remark}
All these definitions also apply to $M_e$, the extension of $M$. In the following, an index $e$ on an application will mean that it is considered on the manifold $M_e$. The lower indices $\textit{inv}, \textit{comp}, \textit{sol}$ attached to a set of functions or distributions will respectively mean that we consider \textit{invariant} functions (or distributions) with respect to the geodesic flow, \textit{compactly supported} functions (or distributions) within a precribed open set, \textit{solenoidal} tensors (or tensorial distributions).
\end{remark}

\subsection{The normal operator}

Eventually, we define the normal operator $\Pi_m := I_m^* I_m$, for $m \geq 0$. The following result asserts that $\Pi_m$ is a pseudodifferential operator of order $-1$ (this mainly follows from the absence of conjugate points), which is elliptic on $\ke D^*$. It will be at the core of our arguments in \S\ref{sect:tech}. 

\begin{proposition}[\cite{Guillarmou-17-2}, Proposition 5.9]
Under the assumption that $(M,g)$ has no conjugate points and a hyperbolic trapped set, $\Pi_m$ is a pseudodifferential operator of order $-1$ on the bundle $\otimes^m_S T^*M^\circ$ which is elliptic on $\ke D^*$ in the sense that there exists pseudodifferential operators $Q,S,R$ of respective order $1,-2,-\infty$ on $M^\circ$ such that:
\[ Q \Pi_m = \text{id}_{M^\circ} + DSD^* + R \]
\end{proposition}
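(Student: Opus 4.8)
The plan is to realize $\Pi_m$ explicitly as an integral operator on $M^\circ$ and to read off its pseudodifferential structure from the singularity of its Schwartz kernel along the diagonal. First I would compute $\Pi := I^* I$ acting on functions on $SM$. Using the formula for $I^*$ stated above, for $f \in \mathcal{C}_c^\infty(SM \setminus \Gamma_-)$ one finds
\[ \Pi f(x,v) = (If)(\varphi_{l_-(x,v)}(x,v)) = \int_{l_-(x,v)}^{l_+(x,v)} f(\varphi_s(x,v))\, ds, \]
so that $\Pi$ simply integrates $f$ over the whole maximal geodesic through $(x,v)$. Consequently $\Pi_m = {\pi_m}_* \, \Pi \, \pi_m^*$, and applying it to a tensor $h$ amounts to integrating $h(\gamma(s))(\dot\gamma(s), \dots, \dot\gamma(s))$ along every geodesic and then averaging over the fiber $S_xM$.

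To exhibit $\Pi_m$ as an operator on $M$, I would fix $x$ and perform the change of variables $(s,v) \in \R \times S_xM \mapsto y = \exp_x(sv)$. The absence of conjugate points guarantees that this map is a local diffeomorphism with no caustics, so the associated volume distortion of the geodesic flow is smooth and non-vanishing wherever defined. This rewrites $\Pi_m h(x) = \int_M K(x,y)\, h(y)\, d\mathrm{vol}(y)$ for an explicit kernel $K(x,y)$ valued in the endomorphisms of $\otimes^m_S T^*M$, built from the tensor $v^{\otimes m}$ in the direction $v$ of the geodesic from $x$ to $y$ together with that Jacobian factor.

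The crucial and most delicate point, which I expect to be the \emph{main obstacle}, is to control $K$ near and away from the diagonal. I would split it as $K = K_{\mathrm{loc}} + K_{\mathrm{nl}}$, where $K_{\mathrm{loc}}$ collects the contribution of short geodesics ($d(x,y) < \eps$). On this region the short geodesic joining $x$ to $y$ is unique and depends smoothly on $(x,y)$, so $K_{\mathrm{loc}}(x,y)$ carries the homogeneous singularity $\sim d(x,y)^{-(n-1)}$ of a classical symbol of order $-1$, and one recognizes $K_{\mathrm{loc}}$ as the kernel of a classical pseudodifferential operator of order $-1$. The nonlocal part $K_{\mathrm{nl}}$ gathers the contribution of long geodesics, including those that linger near the trapped set $K$; this is precisely where the hyperbolicity enters. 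Since there are no conjugate points anywhere, no new diagonal singularity can be produced by long geodesics, and using Proposition \ref{prop:hyp}, which asserts $\mu(\Gamma_-\cup\Gamma_+)=0$ and $\tilde{\mu}(\Gamma_\pm\cap\partial_\pm SM)=0$, the kernel stays well defined and the contribution of $K_{\mathrm{nl}}$ is regularizing away from the diagonal. Making this regularity statement quantitative, in the scale of the relevant anisotropic/Sobolev spaces, is the real work, and it is here that the dynamical estimates inherited from the hyperbolic structure are indispensable; the upshot is that $\Pi_m$ coincides with a classical $\Psi$DO of order $-1$ modulo a smoothing operator.

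Finally I would compute the principal symbol. From the local model $K_{\mathrm{loc}}$, the symbol at $(x,\xi)$ is an endomorphism of $\otimes^m_S T_x^*M$ which, after identifying a tensor with the homogeneous polynomial $v \mapsto f(v^{\otimes m})$, acts by
\[ \langle \sigma(\Pi_m)(x,\xi) f, f \rangle = \frac{C_{m,n}}{|\xi|_g}\int_{S_xM\,\cap\,\xi^\perp} |f(v^{\otimes m})|^2\, dS(v), \]
which is manifestly positive semidefinite. Its kernel consists of the tensors with $f(v^{\otimes m})=0$ for every $v \perp \xi$; a standard linear-algebra argument shows these are exactly the tensors of the form $\xi \odot u$, that is, the range of $\sigma(D)(x,\xi)$ (consistent with the identity $X\pi_m^* = \pi_{m+1}^* D$, which forces potential tensors into $\ke I_m$). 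Hence $\sigma(\Pi_m)$ is elliptic transversally to the potential directions, and the usual parametrix construction for operators elliptic on the complement of the range of $\sigma(D)$ produces pseudodifferential operators $Q,S,R$ of respective orders $1,-2,-\infty$ satisfying $Q \Pi_m = \mathrm{id}_{M^\circ} + DSD^* + R$, which is the asserted ellipticity of $\Pi_m$ on $\ke D^*$.
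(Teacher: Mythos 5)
Your proposal targets a statement that the paper itself does not prove but imports from \cite{Guillarmou-17-2} (Proposition 5.9), with pointers to \cite{Paternain-Zhou-16} and \cite{Sharafutdinov-94} for the ellipticity; so the comparison is with the proof in those references. Your architecture matches theirs: write $\Pi_m = {\pi_m}_* (I^*I) \pi_m^*$ as an integral operator, isolate the short-geodesic contribution, use the absence of conjugate points and the change of variables $(s,v)\mapsto \exp_x(sv)$ to identify it as a classical pseudodifferential operator of order $-1$, then compute the principal symbol as an integral over $S_xM\cap \xi^\perp$, identify its kernel with the range of $\sigma(D)(x,\xi)$, and run the parametrix construction transverse to the potential directions. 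That last block (symbol computation, kernel $=$ range of $\xi\odot\cdot$, construction of $Q,S,R$) is standard and correct.

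The genuine gap is the treatment of the long-geodesic part $K_{\mathrm{nl}}$, which is not a technicality you can defer: it is the whole point of the hyperbolicity hypothesis. The only dynamical input you invoke is Proposition \ref{prop:hyp}, i.e.\ $\mu(\Gamma_-\cup\Gamma_+)=0$; this makes the kernel defined almost everywhere but says nothing about its smoothness. Concretely, for $x,y$ near the projection of the trapped set, $K_{\mathrm{nl}}(x,y)$ is an \emph{infinite} sum over connecting geodesic segments (your change of variables is only a local diffeomorphism, with infinitely many branches). Each term is smooth by absence of conjugate points, but the number of segments of length $\leq T$ grows exponentially in $T$, while differentiating a length-$T$ term in its endpoints produces factors of size $\|d\varphi_T\|\sim e^{\lambda T}$; hence the differentiated series has no reason to converge absolutely, and direct summation yields at best finite regularity. "Each term smooth $+$ sum defined a.e." does not give a smooth (or even pseudodifferential) remainder. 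This is precisely why the proof in \cite{Guillarmou-17-2} does not sum kernels: it writes $\Pi$ in terms of the boundary values $R_\pm(0)$ of the resolvent of the geodesic vector field, uses the microlocal machinery of \cite{Dyatlov-Guillarmou-16} (anisotropic Sobolev spaces, radial source/sink estimates) to confine the wavefront set of the Schwartz kernels of $R_\pm(0)$ to the diagonal, the flow-out, and $E_u^*\times E_s^*$, and then shows that the fiberwise pull-back/push-forward ${\pi_m}_*\cdots\pi_m^*$ eliminates all non-diagonal contributions — hyperbolicity enters through the transversality of $E_u^*,E_s^*$ to the conormals of the fibers, not through a measure-zero statement. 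Without this (or some substitute for it), your sentence "the upshot is that $\Pi_m$ coincides with a classical pseudodifferential operator of order $-1$ modulo a smoothing operator" is an assertion of exactly the thing that needs to be proved.
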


We will sometimes use this Proposition by adding appropriate cutoff functions: it is actually the way it is stated in \cite{Guillarmou-17-2}. We also refer to \cite{Paternain-Zhou-16} for a proof of the elliptic property and to \cite{Sharafutdinov-94} for the original arguments.

\subsection{Main results}

\label{ssect:main}

We now assume that $(M,g)$ is a compact manifold with strictly convex boundary, no conjugate points and a hyperbolic trapped set. It was proved in \cite[Proposition 2.1]{Guillarmou-Mazzucchelli-18} that there exists $\eps > 0$, such that if $g'$ is another metric satisfying $\|g'-g\|_{\mathcal{C}^2} < \eps$, then $(M,g')$ is a Riemannian manifold with strictly convex boundary, no conjugate points and a hyperbolic trapped set. Note that the proposition is stated in dimension $2$, but the proof is actually independent of the dimension. In the following, we will always assume that $g'$ is close enough to $g$ in the $\mathcal{C}^2$ topology so that it satisfies these assumptions. We introduce $N = \floor*{\frac{n+1}{2}}+1 \geq 2$. We can now state our main result.

\begin{theorem}
\label{th1}
Let $(M,g)$ be a compact connected $n$-dimensional manifold with strictly convex boundary, no conjugate points and hyperbolic trapped set. If $I^e_2$ is $s$-injective on some extension $M_e$ of $M$ (as detailed in \S\ref{ss:intro}), then $(M,g)$ is locally marked boundary rigid in the sense that: for any $\alpha > 0$ arbitrarily small, there exists $\eps > 0$ such that for any metric $g'$ with same marked boundary distance as $g$ and such that $\|g' - g\|_{\mathcal{C}^{N,\alpha}} < \eps$, there exists a smooth diffeomorphism $\phi : M \rightarrow M$, such that $\phi|_{\partial M} = \text{id}$ and $\phi^*g' = g$.
\end{theorem}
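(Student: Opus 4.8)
The plan is to set this up as a nonlinear perturbation problem and use an inverse-function-type argument that linearizes to the $s$-injectivity of $I_2$. Write $f := g' - g$, a symmetric $2$-tensor that is small in $\mathcal{C}^{N,\alpha}$. Since $g$ and $g'$ have the same marked boundary distance, every geodesic of $g'$ joining boundary points in a given homotopy class has the same $g'$-length as the corresponding $g$-geodesic has $g$-length. The first step is to exploit a gauge freedom: rather than pulling $g'$ back by a diffeomorphism to make the metrics agree to infinite order at $\partial M$ (a normal gauge), I would use the lemma of Croke--Dairbekov--Sharafutdinov to find a diffeomorphism $\phi$ fixing the boundary so that the difference $f = \phi^* g' - g$ is \emph{solenoidal}, i.e.\ $D^* f = 0$, while remaining small. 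Proving the theorem then reduces to showing that such a small solenoidal $f$ with matching marked boundary distance must vanish.

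\textbf{Reduction to an X-ray estimate.} The heart of the argument is a quadratic control of $I_2 f$ in terms of $f$ itself, which is the content of the referenced Lemma~\ref{lem:est1}. The linearization of the length functional along a fixed geodesic $\gamma$ of $g$ gives, to first order, $\int_\gamma f(\dot\gamma,\dot\gamma)\,dt = I_2 f(\gamma)$, and the equality of marked boundary distances forces the \emph{zeroth}-order length difference to vanish on every geodesic connecting boundary points. Comparing the $g$- and $g'$-lengths of the $g'$-minimizing geodesic in each class, and using that $g$ has no conjugate points so that geodesics realize the distance in their homotopy class, one obtains that the first-order term $I_2 f(\gamma)$ is controlled quadratically: schematically $|I_2 f(\gamma)| \lesssim \|f\|^2$ uniformly over the relevant family of geodesics. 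Integrating this over $\partial_- SM$ against $d\mu_\nu$ yields an estimate of the form $\|I_2 f\|_{L^2(\partial_- SM, d\mu_\nu)} \lesssim \|f\|_{\mathcal{C}^{0}}^2$ or similar, where crucially the estimate is at the level of the \emph{boundary} operator, not yet the normal operator.

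\textbf{From the boundary estimate to $f=0$ via the normal operator.} Here I would invoke the microlocal machinery. Apply the normal operator $\Pi_2^e = (I_2^e)^* I_2^e$ on the extension $M_e$; by the proposition of Guillarmou there exist $Q, S, R$ with $Q\Pi_2^e = \mathrm{id} + DSD^* + R$, so that since $f$ is solenoidal ($D^* f = 0$), we get $f = Q \Pi_2^e f - Rf$ modulo controllable terms. The $s$-injectivity hypothesis on $I_2^e$ promotes the elliptic estimate to a genuine \emph{stability} estimate: $\|f\|_{L^2} \lesssim \|\Pi_2^e f\|_{H^1} + \|f\|_{H^{-s}}$, and the $H^{-s}$ remainder is absorbed by a compactness/finite-dimensionality argument using injectivity (the kernel of $\Pi_2^e$ on solenoidal tensors is trivial). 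Combining with the boundary estimate $\|I_2 f\| \lesssim \|f\|^2$ gives $\|f\|_{L^2} \lesssim \|f\|^2$, and for $\|f\|$ small enough this forces $f = 0$. To bridge the gap between the low-regularity boundary control and the high-regularity spaces on which the microlocal analysis operates, I would use interpolation estimates, which is precisely why the refined regularity statements from \cite{Lefeuvre-18-1} are needed.

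\textbf{Main obstacle.} I expect the decisive difficulty to be \emph{closing the quadratic estimate against loss of regularity}. The naive self-improving inequality $\|f\|_{L^2} \lesssim \|f\|_{L^2}^2$ would be immediate, but the quadratic control of $I_2 f$ is naturally available only in a weak norm (coming from the length comparison, which sees $f$ pointwise along geodesics), whereas the stability estimate from the normal operator requires control in a stronger Sobolev or Hölder norm. Interpolating between a high-regularity bound $\|f\|_{\mathcal{C}^{N,\alpha}} \le \eps$ and the weak quadratic bound, so that the resulting inequality genuinely self-improves and the absorbed remainders do not swamp the main term, is the technical crux. Managing the trapped set—ensuring the integrals over $\partial_- SM$ converge and that the sets $\Gamma_\pm$ of measure zero (Proposition~\ref{prop:hyp}) do not contribute—is a subsidiary but essential point that makes the $L^p$--$L^2$ mapping properties of $I$ and $I^*$ usable throughout.
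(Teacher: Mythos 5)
Your overall skeleton does match the paper's: reduce to a solenoidal gauge via the Croke--Dairbekov--Sharafutdinov lemma, establish a quadratic bound on $I_2f$, convert $s$-injectivity of $I_2^e$ into a stability estimate for the normal operator $\Pi_2^e$ (this is essentially the paper's Lemma~\ref{lem:ell}, with the compact remainders absorbed by an injectivity argument, as you describe), and close the loop by interpolating against the $\mathcal{C}^{N,\alpha}$ smallness. The genuine gap is in how you propose to obtain the quadratic estimate. You derive it pointwise along geodesics: a Taylor expansion of the length functional giving $|I_2f(\gamma)| \lesssim \|f\|^2$ \emph{uniformly over the relevant family of geodesics}, then integration over $\partial_-SM$ to get an $L^2$ bound by $\|f\|_{\mathcal{C}^0}^2$. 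That is exactly the simple-manifold argument, and it fails in the trapped setting; the paper contains a remark explaining precisely why it is ``hopeless'' here. With a nonempty hyperbolic trapped set the exit times are unbounded: as $(x,v)\to\Gamma_-$ the geodesic length $\ell(\gamma)\to\infty$, so $I_2f(\gamma)$ can already be of size $\ell(\gamma)\|f\|_{\mathcal{C}^0}$ (in particular $I_2f \notin L^\infty(\partial_-SM)$), and the remainder in the length expansion carries constants that grow with $\ell(\gamma)$ --- in fact exponentially, since nearby geodesics of $g$ and $g'$ separate exponentially near the hyperbolic set. No uniform pointwise quadratic control exists, and no $L^2$ or $L^\infty$ estimate can be obtained by integrating it.

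The paper's route to the quadratic estimate, which is the real crux of the proof and is absent from your proposal, is integrated rather than pointwise and has two separate ingredients. First, a soft one-sided statement: $I_2(f)\geq 0$ almost everywhere, proved by showing that the energy $E(\tau)$ of the $g_\tau$-geodesic in a fixed homotopy class (where $g_\tau = g+\tau f$) is concave in $\tau$ with $E(0)=E(1)$ because the marked boundary distances agree, whence $E'(0)=I_2f \geq 0$ --- with no quantitative error term at all. Second, the equality $\mathrm{vol}(g)=\mathrm{vol}(g')$ (a consequence of equal marked boundary distances, via the lens data), combined with a pointwise determinant inequality and Santal\'o's formula: positivity turns $\int_{\partial_-SM} I_2(f)\,d\mu_\nu$ into $\|I_2f\|_{L^1}$, Santal\'o identifies this integral with $c_2^{-1}\langle g,f\rangle_{L^2}$, and the volume comparison then yields $\|I_2f\|_{L^1(\partial_-SM)} \lesssim \|f\|_{L^2}^2$. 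Note the precise form: an $L^1$ bound on the left and the $L^2$ norm squared on the right. This form is what makes your own final interpolation step work --- $\|f\|_{L^2}^2$ interpolates between $H^{-s-1/2}$ and $H^N$, whereas a $\mathcal{C}^0$ norm squared would not plug into the Sobolev interpolation, and an $L^1$ left-hand side is all one can hope for since $I_2f$ is unbounded. (A further technical point you gloss over: $E_0f$ is not solenoidal on $M_e$ even when $f$ is solenoidal on $M$, which is why the paper needs a preliminary lemma comparing $f$ with the solenoidal part of $E_0f$ and commutator terms $[\chi,D]$ in the parametrix argument.) As written, your proof cannot get started at its central step.
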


In particular, under the assumption that the curvature of $(M,g)$ is non-positive, it was proved in \cite{Guillarmou-17-2} that $I_m$ is $s$-injective for any $m \geq 0$, and thus $m=2$ in particular. This yields a first corollary:

\begin{corollary}
\label{coro1}
Assume $(M,g)$ satisfies the assumptions of Theorem \ref{th1} and has non-positive curvature. Then it is locally marked boundary rigid.
\end{corollary}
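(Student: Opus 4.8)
\emph{Proof plan.} The plan is to obtain Corollary \ref{coro1} as a direct application of Theorem \ref{th1}, the only nontrivial point being to verify its hypothesis — the $s$-injectivity of $I^e_2$ on a suitable extension — under the mere assumption of non-positive curvature. First I would invoke the injectivity result of Guillarmou \cite{Guillarmou-17-2}: for a compact manifold with strictly convex boundary, non-positive curvature, no conjugate points and a hyperbolic trapped set, $I_m$ is $s$-injective for every $m \geq 0$, hence in particular for $m = 2$. The hypotheses already granted by Corollary \ref{coro1} (hyperbolic trapped set, strict convexity) dovetail with those of Guillarmou's theorem, and the non-positive curvature assumption is precisely what supplies the absence of conjugate points via the positivity of the index form.

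The key step is to produce an extension $M_e \supset M$ on which the same injectivity holds, since Theorem \ref{th1} requires $s$-injectivity of $I^e_2$ and not merely of $I_2$. I would carry out the extension of \S\ref{ss:intro} while keeping the curvature non-positive. Working in Fermi coordinates $(y,t)$ about $\partial M$, where $t$ is the signed distance to the boundary, the metric near $\partial M$ reads $g = dt^2 + h_t(y)$, and strict convexity of $\partial M$ is a definiteness condition on $\partial_t h_t|_{t=0}$. Extending the family $h_t$ to the collar $t > 0$ so that it matches the given $h_t$ to infinite order at $t=0$, stays strictly convex, and keeps every sectional curvature $\leq 0$ (which can be read off the warped-product/Riccati formulas controlling the shape operator $\tfrac12 h_t^{-1}\partial_t h_t$ and its derivative) yields a non-positively curved $(M_e, g_e)$ with strictly convex boundary. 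By imposing the bounded-arc condition $L < +\infty$ of \S\ref{ss:intro} on the collar, no new trapped geodesics are created, so the trapped set of $M_e$ coincides with the hyperbolic set $K$ of $M$, and non-positive curvature again guarantees no conjugate points on $M_e$.

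Having built such an $M_e$, Guillarmou's theorem applies verbatim to $(M_e, g_e)$ and gives the $s$-injectivity of $I^e_2$. All hypotheses of Theorem \ref{th1} are then met, and the conclusion that $(M,g)$ is locally marked boundary rigid follows at once.

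I expect the main obstacle to be the second step, namely the curvature-preserving extension. Gluing two non-positively curved pieces need not preserve the sign of the curvature across the interface, so the collar family $h_t$ must be chosen with care: the convexity of $\partial M$ provides the room to keep $\partial_t h_t$ definite, but controlling the second $t$-derivative so that all sectional curvatures remain $\leq 0$ throughout the gluing region is the delicate point. Once this model collar is constructed, the remainder is bookkeeping.
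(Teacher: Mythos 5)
Your proposal is correct and takes essentially the same route as the paper, which disposes of this corollary in a single line: by \cite{Guillarmou-17-2}, non-positive curvature together with the standing hypotheses (strictly convex boundary, no conjugate points, hyperbolic trapped set) yields $s$-injectivity of $I_m$ for every $m \geq 0$, hence of $I_2$, and Theorem \ref{th1} then applies. The curvature-preserving extension that you single out as the key step is not discussed in the paper at all — it implicitly takes for granted that Guillarmou's injectivity theorem can be invoked on the extension $M_e$ of \S\ref{ss:intro} — so your collar construction is a refinement filling in a point the paper leaves tacit, not a departure from its argument.
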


Without any assumption on the curvature, we proved in \cite{Lefeuvre-18-1} the $s$-injectivity of $I_2$ for a surface with strictly convex boundary, no conjugate points and hyperbolic trapped set. As a consequence, we recover the following result, which was already proved in \cite{Guillarmou-Mazzucchelli-18} using a different approach.

\begin{corollary}
\label{coro2}
Assume $(M,g)$ is a surface satisfying the assumptions of Theorem \ref{th1}. Then it is locally marked boundary rigid.
\end{corollary}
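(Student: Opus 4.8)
The plan is to deduce this statement directly from Theorem \ref{th1} by verifying its sole outstanding hypothesis—the $s$-injectivity of $I_2^e$ on a suitable extension—in the two-dimensional setting. The geometric assumptions of Theorem \ref{th1}, namely that $(M,g)$ is compact, connected, has strictly convex boundary, no conjugate points and a hyperbolic trapped set, are taken as given in the corollary; the only additional input needed to conclude is therefore an extension $M_e$ of $M$ on which $I_2^e$ is $s$-injective.

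First I would recall the construction of the extension $M_e$ from \S\ref{ss:intro}. There, $M$ is embedded into a strictly larger smooth manifold $M_e$ which again has strictly convex boundary and no conjugate points, and whose trapped set coincides with that of $M$, hence is still hyperbolic. Since passing to the extension preserves the dimension, $M_e$ is again a surface. Consequently $M_e$ is a surface with strictly convex boundary, no conjugate points and a hyperbolic trapped set—precisely the class of surfaces for which the $s$-injectivity of the X-ray transform $I_2$ was established, with no assumption on the curvature, in \cite{Lefeuvre-18-1}.

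Applying that $s$-injectivity result to $M_e$ (rather than to $M$ itself) yields that $I_2^e$ is $s$-injective. With this hypothesis in hand, Theorem \ref{th1} applies verbatim and produces, for any $\alpha>0$, an $\eps>0$ and a diffeomorphism $\phi$ fixing the boundary with $\phi^*g'=g$ whenever $g'$ has the same marked boundary distance and $\|g'-g\|_{\mathcal{C}^{N,\alpha}}<\eps$. This is exactly the asserted local marked boundary rigidity.

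The only point that requires care is that one must invoke the theorem of \cite{Lefeuvre-18-1} on the extension $M_e$, and not on $M$ directly; but since $M_e$ lies in the same class of surfaces, this causes no difficulty. In short, there is no genuine obstacle at this stage: the analytic content is supplied entirely by the $s$-injectivity result of \cite{Lefeuvre-18-1} in dimension two and by Theorem \ref{th1}, and the corollary is merely their conjunction.
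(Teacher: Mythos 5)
Your proposal is correct and follows essentially the same route as the paper: the paper deduces the corollary by invoking the $s$-injectivity result of \cite{Lefeuvre-18-1} for surfaces with strictly convex boundary, no conjugate points and hyperbolic trapped set, and then applying Theorem \ref{th1}. Your additional care in noting that the injectivity result must be applied to the extension $M_e$ (which remains in the same class of surfaces by the construction of \S\ref{ss:intro}) simply makes explicit a step the paper leaves implicit.
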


However, in dimension $n \geq 3$ and without any assumption on the curvature, the injectivity of $I_2$ (and more generally $I_m$, of $m \geq 2$) is still an open question on a manifold satisfying the assumptions of Theorem \ref{th1}.

\subsection{Further developments}

Following a similar framework of proof as the one developed in this article, we were able to prove with Guillarmou in \cite{Guillarmou-Lefeuvre-18} the local rigidity of the marked length spectrum on a closed manifold with negative sectional curvature, thus partially answering a long-standing conjecture of Burns-Katok \cite{Burns-Katok-85}.

\subsection{Acknowledgements}

We warmly thank Colin Guillarmou for fruitful discussions during the redaction of this paper. We are also grateful to the anonymous referee for helpful comments. This project has received funding from the European Research Council (ERC) under the European Union’s Horizon 2020 research and innovation programme (grant agreement No. 725967).

\section{Technical tools}

\label{sect:tech}

We will sometimes drop the notation $C$ for the different constants which may appear at each line of our estimates and rather use the symbol $\lesssim$. By $\|A\| \lesssim \|B\|$, we mean that there exists a constant $C > 0$, which is independent of the elements $A$ and $B$ considered in their respective functional spaces such that, $\|A\| \leq C \|B\|$. In particular, in our case, the constant $C$ will be independent of the tensor $f$. We will also drop the full description of the functional spaces when the context is clear. For the reader's convenience, we hope to simplify the notation by these means. \\

Let us fix some $\eps > 0$ so that any metric $g'$ in an $\eps$-neighborhood of $g$ (with respect to the $\mathcal{C}^2$ topology) is strictly convex, has no conjugate points and a hyperbolic trapped set. We assume from now on that $I^e_2$ is $s$-injective on $M_e$.

\subsection{Reduction of the problem}

It is rather obvious that the metric $g$ is solenoidal with respect to itself since $D^*g = -\text{tr}_{12}(\nabla g) = 0$ ($\nabla g = 0$ since $\nabla$ is the Levi-Civita connection). What is less obvious is that any metric in a vicinity of $g$ is actually isometric to a solenoidal metric (with respect to $g$). We recall that $N = \floor*{\frac{n+1}{2}}+1$.

\begin{proposition}[\cite{Croke-Dairbekov-Sharafutdinov-00}, Theorem 2.1]
There exists a $\mathcal{C}^{N,\alpha}$-neighborhood $W$ of $g$ such that for any $g' \in W$, there exists a $\mathcal{C}^N$-diffeomorphism $\phi : M \rightarrow M$ (it is actually $\mathcal{C}^{N,\alpha}$) preserving the boundary, such that $g'' = \phi^*g'$ is solenoidal with respect to the metric $g$. Moreover, if $W$ is chosen small enough, we can guarantee that $\|g''-g\|_{\mathcal{C}^N} < \eps$.
\end{proposition}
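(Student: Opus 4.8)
The plan is to realise $\phi$ by solving a nonlinear elliptic boundary value problem via the implicit function theorem, using the invertibility of the Dirichlet Laplacian on $1$-forms as the essential linear input. I would parametrise diffeomorphisms close to the identity and fixing the boundary by vector fields $U$ with $U|_{\partial M}=0$ (writing $\phi_U(x)=\exp^g_x(U(x))$, or the time-one flow), so that $U\mapsto\phi_U$ is a chart near $\text{id}$ and $\phi_U|_{\partial M}=\text{id}$ automatically. The gauge condition then becomes an equation for $U$, and I would study the map
\[ F(U,g') := D^*\big(\phi_U^*\,g'\big), \]
where $D^*$ is the divergence with respect to the \emph{fixed} background metric $g$. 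Since $\nabla g=0$ one has $F(0,g)=D^*g=0$, so $(U,g')=(0,g)$ is a solution around which we look for nearby solutions as $g'$ varies in a $\mathcal{C}^{N,\alpha}$-ball.

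The key computation is the differential of $F$ in $U$ at the base point. Using $\frac{d}{dt}\big|_{t=0}\phi_{tZ}^*g=\mathcal{L}_Z g$ together with the identity $\mathcal{L}_Z g = 2\,D(Z^\flat)$, where $Z^\flat$ is the $1$-form dual to the vector field $Z$, one finds
\[ \partial_U F(0,g)\cdot Z = D^*\big(\mathcal{L}_Z g\big) = 2\,D^*D\,Z^\flat = 2\,\Delta\,Z^\flat. \]
By the result quoted in the footnote (Sharafutdinov, Theorem 3.3.2), the operator $\Delta=D^*D$ acting on $1$-forms with the Dirichlet condition $Z^\flat|_{\partial M}=0$ is elliptic, has trivial kernel and cokernel, and satisfies the Lopatinskii transmission condition; hence it is an isomorphism between the appropriate spaces of boundary-vanishing $1$-forms. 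Thus $\partial_U F(0,g)$ is invertible, and the implicit function theorem in Hölder spaces produces a continuous map $g'\mapsto U(g')$, defined on a $\mathcal{C}^{N,\alpha}$-neighbourhood $W$ of $g$, with $U(g)=0$ and $F(U(g'),g')=0$. Setting $\phi:=\phi_{U(g')}$ gives $\phi|_{\partial M}=\text{id}$ and makes $g'':=\phi^*g'$ solenoidal with respect to $g$. Viewing the relation $D^*(\phi^*g')=0$ directly as a second-order quasilinear elliptic system for $\phi$ — whose coefficients and forcing involve $g'$ only through $g'$ and $Dg'\in\mathcal{C}^{N-1,\alpha}$, and \emph{not} through second derivatives of $g'$ — a Schauder bootstrap upgrades $\phi$ to $\mathcal{C}^{N+1,\alpha}$, so that $g''=\phi^*g'\in\mathcal{C}^{N,\alpha}$. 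Finally, continuity of $g'\mapsto U(g')$ at $g$ with $U(g)=0$ forces $\|\phi-\text{id}\|$ to be small, so that shrinking $W$ yields $\|g''-g\|_{\mathcal{C}^N}<\eps$; keeping $N\ge 2$ guarantees that $g''$ still lies in the $\mathcal{C}^2$-class of admissible metrics (strictly convex boundary, no conjugate points, hyperbolic trapped set).

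The main obstacle is the function-space bookkeeping around the pullback, which loses one derivative: the expression $\phi_U^*g'$ involves the composition operator $U\mapsto g'\circ\phi_U$ and the Jacobian $D\phi_U$, and these are differentiable on Hölder scales only at the cost of a derivative, so that $F$ is genuinely $\mathcal{C}^1$ only into a Hölder space one (or two) orders below the domain. The delicate point is to choose the source and target so that $F$ is $\mathcal{C}^1$ \emph{and} $\partial_U F(0,g)=2\Delta$ is an isomorphism between exactly those spaces: it is precisely the two-derivative gain of $\Delta^{-1}$ furnished by the Schauder estimates that absorbs the derivative lost to $D^*$ and to the composition, and the positivity of the Hölder exponent $\alpha$ together with the lower bound $N\ge 2$ is what keeps all the products, compositions and nonlinear remainders inside Hölder algebras so that the iteration closes. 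An equivalent and slightly more hands-on route avoids the abstract theorem altogether: rewrite the gauge condition as the fixed-point equation $Z^\flat=-\tfrac12\,\Delta^{-1}D^*\big(\phi_U^*g'-g-\mathcal{L}_U g\big)-\tfrac12\,\Delta^{-1}D^*(g'-g)$ and run a contraction argument on a small ball, the smallness of $g'-g$ controlling both the term linear in $U$ and the quadratic remainder, after which the same elliptic bootstrap recovers the stated regularity.
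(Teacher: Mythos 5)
The paper offers no proof of this proposition at all — it is imported verbatim from \cite{Croke-Dairbekov-Sharafutdinov-00} (Theorem 2.1) — and your argument is essentially the proof given in that reference: parametrize boundary-fixing diffeomorphisms by vector fields vanishing on $\partial M$, apply the implicit function theorem (or a contraction) to $F(U,g')=D^*(\phi_U^*g')$ using that the linearization $\partial_U F(0,g)=2D^*D$ with Dirichlet conditions is an isomorphism on appropriate H\"older spaces, and recover the top regularity of $\phi$ by a Schauder bootstrap of the quasilinear elliptic system, whose coefficients involve only first derivatives of $g'$. The one slip is in your fixed-point reformulation, where the first term should be $-\tfrac12\,\Delta^{-1}D^*\bigl(\phi_U^*g'-g'-\mathcal{L}_U g\bigr)$ (subtracting $g'$, not $g$): as written, the sum of the two terms makes the equation equivalent to $D^*(\phi_U^*g')=-D^*g'$ rather than $D^*(\phi_U^*g')=0$, whereas with $g'$ subtracted the first term is the genuinely quadratic remainder and $-\tfrac12\,\Delta^{-1}D^*(g'-g)$ is the small linear forcing, as intended.
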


We can thus reduce ourselves to the case where $g'$ is solenoidal with respect to the metric $g$. We introduce $f:=g'-g$, which is, by construction, $\mathcal{C}^N$, solenoidal and satisfies $\|f\|_{\mathcal{C}^N} < \eps$. Our goal is to prove that $f \equiv 0$.

We define $g_\tau := g+\tau f$ for $0 \leq \tau \leq 1$. As mentioned earlier, since $f$ is small enough, each of these metrics have strictly convex boundary, a hyperbolic trapped set and no conjugate points. From now on, we assume that $d_g = d_{g'}$.

\begin{lemma}
$I_2(f) \geq 0$ almost everywhere.
\end{lemma}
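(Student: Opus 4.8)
The plan is to show that the X-ray transform $I_2(f)$ is nonnegative almost everywhere by interpreting each value $I_2(f)(x,v)$ as a derivative of length along the geodesics of the family $g_\tau = g + \tau f$, and then using that the common marked boundary distance makes $g$-geodesics length-minimizing for $g'$ within their homotopy class.

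\begin{proof}[Proof sketch]
Fix a point $(x,v)\in\partial_-SM$ which is not trapped, i.e. $(x,v)\notin\Gamma_-$; by Proposition \ref{prop:hyp}(2) this is the generic case with respect to $d\mu_\nu$, so it suffices to establish the inequality there. Let $\gamma$ be the (finite-length) $g$-geodesic issued from $(x,v)$, with endpoints $x,y\in\partial M$ and exit time $\ell:=l_+(x,v)<+\infty$. The first step is to compute $I_2(f)(x,v)=\int_0^\ell (\pi_2^* f)(\varphi_t(x,v))\,dt=\int_0^\ell f_{\dot\gamma(t)}(\dot\gamma(t),\dot\gamma(t))\,dt$. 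I would recognize this as the first variation of the energy/length functional: writing $L_g(\gamma)$ for the $g$-length of the curve $\gamma$ and $E_\tau(\gamma)=\int_0^\ell g_\tau(\dot\gamma,\dot\gamma)\,dt$, one has $\frac{d}{d\tau}E_\tau(\gamma)\big|_{\tau=0}=\int_0^\ell f(\dot\gamma,\dot\gamma)\,dt=I_2(f)(x,v)$, since along a unit-speed $g$-geodesic the energy and (squared) length differ only by harmless factors.

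The second and central step is to exploit the hypothesis $d_g=d_{g'}$. Because $(M,g)$ has strictly convex boundary and no conjugate points, $\gamma$ is the \emph{unique} $g$-geodesic in its homotopy class $[\gamma]\in\mathcal{P}_{x,y}$ and realizes $d_g(x,y,[\gamma])$ (this is the content of \cite[Lemma 2.2]{Guillarmou-Mazzucchelli-18}, quoted in the introduction). The same holds for $g'$. Hence $d_g(x,y,[\gamma])=L_g(\gamma)$, while $d_{g'}(x,y,[\gamma])$ is the infimum of $g'$-length over curves in $[\gamma]$, so in particular $d_{g'}(x,y,[\gamma])\le L_{g'}(\gamma)$, the $g'$-length of the \emph{same} curve $\gamma$. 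Since the marked boundary distances agree, $L_g(\gamma)=d_g=d_{g'}\le L_{g'}(\gamma)$, i.e.\ the $g$-geodesic $\gamma$ has $g'$-length at least its $g$-length. This is precisely the sign information needed.

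The third step is to convert this length inequality into the sign of the linear term $I_2(f)(x,v)$. Here the clean route is to avoid second variations: I would argue that for the family $g_\tau=g+\tau f$, each intermediate metric $g_\tau$ also has the same marked boundary distance realized by $g$-geodesics is \emph{not} assumed, so instead I work directly with the endpoints $\tau=0,1$. Parametrizing $\gamma$ by $g$-arclength, $L_{g'}(\gamma)=\int_0^\ell\sqrt{g'(\dot\gamma,\dot\gamma)}\,dt=\int_0^\ell\sqrt{1+f(\dot\gamma,\dot\gamma)}\,dt$, and by concavity of the square root, $\sqrt{1+s}\le 1+\tfrac12 s$, so $L_{g'}(\gamma)\le \ell+\tfrac12\int_0^\ell f(\dot\gamma,\dot\gamma)\,dt=L_g(\gamma)+\tfrac12 I_2(f)(x,v)$. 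Combining with $L_{g'}(\gamma)\ge L_g(\gamma)$ from the second step gives $0\le L_{g'}(\gamma)-L_g(\gamma)\le\tfrac12 I_2(f)(x,v)$, whence $I_2(f)(x,v)\ge 0$.

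The step I expect to be the main obstacle is the second one: one must be careful that the homotopy class of $\gamma$ viewed as a curve from $x$ to $y$ is well-defined and identical for $g$ and $g'$, and that $\gamma$ genuinely lies in $\mathcal{P}_{x,y}$ so that the comparison $d_{g'}(x,y,[\gamma])\le L_{g'}(\gamma)$ applies to the correct class. This is guaranteed because $M$ is fixed as a smooth manifold (only the metric varies), so homotopy classes of boundary-to-boundary curves are metric-independent, and the $g$-geodesic $\gamma$ is an admissible competitor for the $g'$-marked distance in its own homotopy class. One also needs that this reasoning is valid for $d\mu_\nu$-almost every $(x,v)$, which is exactly why the trapped directions, negligible by Proposition \ref{prop:hyp}, can be discarded.
\end{proof}
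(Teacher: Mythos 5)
Your proof is correct, and it takes a genuinely different (and more elementary) route than the paper's. The paper lifts everything to the universal cover $\wt{M}$ so that homotopy classes disappear, introduces the one-parameter family $g_\tau = g + \tau f$ and the energy $E(\tau)$ of the unique $g_\tau$-geodesic joining the fixed endpoints, and imports the argument of Croke--Dairbekov--Sharafutdinov to assert that $E$ is $\mathcal{C}^2$ and concave on $[0,1]$; since equal marked boundary distances give $E(0)=E(1)$, concavity forces $E'(0)\geq 0$, and $E'(0)$ is identified (up to a positive factor) with $I_2(f)(p,\xi)$. You bypass the family entirely: you use only the endpoint metrics, the fact that the $g$-geodesic $\gamma$ realizes $d_g(x,y,[\gamma])$ (the quoted Lemma 2.2 of Guillarmou--Mazzucchelli, which applies because $\gamma$ is \emph{the} unique geodesic in its class) while being an admissible competitor for $d_{g'}(x,y,[\gamma])$, together with the pointwise inequality $\sqrt{1+s}\leq 1+\tfrac{1}{2}s$ (legitimate here since $1+f(\dot\gamma,\dot\gamma)=g'(\dot\gamma,\dot\gamma)>0$), to get
\[
L_g(\gamma) \;\leq\; L_{g'}(\gamma) \;\leq\; L_g(\gamma) + \tfrac12 I_2(f)(x,v),
\]
whence $I_2(f)(x,v)\geq 0$ off the measure-zero trapped set. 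The underlying mechanism is the same in both proofs --- the $g$-geodesic serving as a competitor curve for $g'$ is exactly what makes $E$ concave in the paper's version (an infimum of functions affine in $\tau$) --- but your formulation needs neither the intermediate metrics $g_\tau$ (hence no check that each $g_\tau$ has no conjugate points and a unique geodesic in the class), nor the smooth dependence $\tau\mapsto\gamma_\tau$, nor the $\mathcal{C}^2$/concavity statement, nor the passage to the universal cover. What the paper's route buys is a transparent link with the linearized problem ($E'(0)$ \emph{is} the X-ray transform) in the exact form used for simple manifolds in Croke--Dairbekov--Sharafutdinov, so it can cite that argument wholesale; your route is shorter and self-contained for the sign statement actually needed.
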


\begin{proof}[Proof]
Let $\wt{M}$ denote the universal cover of $M$. We lift all the objects to the universal cover and denote them by $\wt{\cdot}$. We consider $(p,\xi) \in \partial_-S\wt{M} \setminus \wt{\Gamma}_-$ and denote by $q \in \wt{M}$ the endpoint of the geodesic generated by $(p,\xi)$. By \cite[Lemma 2.2]{Guillarmou-Mazzucchelli-18}, we know that for each $\tau \in [0,1]$, there exists a unique $g_\tau$-geodesic $\gamma_\tau : [0,1] \rightarrow \wt{M}$ with endpoints $p$ and $q$. Note that $\gamma_\tau$ depends smoothly on $\tau$\footnote{Indeed, $\tau \mapsto g_\tau$ depends smoothly on $\tau$, so $\xi_\tau := \left(\exp_p^{g_\tau}\right)^{-1}(q)$ depends smoothly on $\tau$. Thus $(t,\tau) \mapsto \varphi_t^{g_\tau}(p,\xi_\tau)$ is smooth in both variables and by the implicit function theorem, the length $l_+^{g_\tau}(p,\xi_\tau)$ is smooth in $\tau$. Thus, the reparametrized geodesic $\gamma_\tau$ depends smoothly on $\tau$.}.

We introduce the energy $E(\tau) := \int_{0}^1 \wt{g}_\tau(\dot{\gamma_\tau}(s),\dot{\gamma_\tau}(s)) ds$. The arguments of \cite[Proposition 3.1]{Croke-Dairbekov-Sharafutdinov-00} apply here as well: they prove that $E$ is a $\mathcal{C}^2$ function on $[0,1]$ which is concave. Moreover, since the boundary distance of $\wt{g}$ and $\wt{g}'$ agree, one has $E(0)=E(1)$. This implies that $E'(0) \geq 0$, but one can see that $E'(0) = \wt{I}_2(\wt{f})(p,\xi)$. Eventually, since $\partial_-S\wt{M}\cap\wt{\Gamma}_-$ has zero measure (with respect to $d\wt{\mu}_\nu$) by Proposition \ref{prop:hyp}, we obtain the result on the universal cover and projecting $\wt{f}$ on the base, we obtain the sought result.
\end{proof}

Notice that, since $\pi_2^*g \equiv 1$ on $SM$, one has for some constant $c_2 > 0$:
\[ \begin{split} \langle g,f \rangle_{L^2(\otimes^2_S T^*M)} & = c_2 \langle \pi_2^*g,\pi_2^*f \rangle_{L^2(SM)} \\ & = c_2 \int_{SM} \pi_2^* f(x,v) d\mu(x,v) \\ & = c_2 \int_{\partial_-SM} I_2(f)(x,v) d\mu_\nu(x,v), \end{split} \]
where the last equality follows from Santaló's formula. But since $I_2(f) \geq 0$ almost everywhere, one gets:
\[ \langle g,f \rangle_{L^2(\otimes^2_S T^*M)} = c_2 \int_{\partial_-SM} I_2(f)(x,v) d\mu_\nu(x,v) = c_2 \|I_2(f)\|_{L^1(\partial_-SM)} \]

We will now prove an estimate on the $L^1$-norm of $I_2(f)$ which is crucial in our proof. It is based on the equality of the volume of $g$ and $g'$, which is a consequence of the fact that their marked boundary distance functions coincide because $\phi$ is isotopic to the identity. Indeed, one can first construct a diffeomorphism $\psi : M \rightarrow M$ such that $\psi|_{\partial M} = \text{id}$ and both $g_0 := \psi^*g$ and $g'$ coincide at all points of $\partial M$ (it is a well-known fact for simple metrics and was proved in \cite[Lemma 2.3]{Guillarmou-Mazzucchelli-18} in our case). Note that $\text{vol}(g_0) = \text{vol}(g)$ and that the marked boundary distance function of $g_0$ and $g'$ still coincide. By \cite[Lemma 2.4]{Guillarmou-Mazzucchelli-18}, this implies that the metrics $g_0$ and $g'$ have same lens data, which, in turn, implies the equality of the two volumes by Santalo's formula (see \cite[Lemma 2.5]{Guillarmou-Mazzucchelli-18}).

\begin{lemma}
\label{lem:est1}
There exists a constant $C > 0$, such that:
\[ \|I_2(f)\|_{L^1(\partial_- SM)} \leq C \|f\|^2_{L^2(M,\otimes^2_S T^*M)} \]
\end{lemma}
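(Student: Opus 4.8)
The plan is to leverage the two facts assembled just before the statement: the exact identity $\langle g,f\rangle_{L^2(\otimes^2_S T^*M)} = c_2\,\|I_2(f)\|_{L^1(\partial_-SM)}$, coming from Santaló's formula together with the positivity $I_2(f)\ge 0$, and the equality of volumes $\text{vol}(g)=\text{vol}(g')$. Granting these, the lemma reduces to the single quadratic estimate $\langle g,f\rangle_{L^2}\le C\|f\|_{L^2}^2$, and the gain of one power in $f$ will come precisely from the vanishing of the \emph{first} variation of the volume in the gauge forced by the coinciding boundary data.

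Concretely, I would introduce the scalar function $V(\tau):=\text{vol}(g_\tau)=\int_M \sqrt{\det(g+\tau f)}\,dx$ in local coordinates. Differentiating the determinant gives
\[ V'(0)=\tfrac{1}{2}\int_M \text{tr}_g(f)\,d\text{vol}_g=\tfrac{1}{2}\langle g,f\rangle_{L^2}, \]
where I use that the pointwise contraction $\langle g,f\rangle_g$ is exactly $\text{tr}_g(f)=g^{ij}f_{ij}$. Since the volume equality means $V(0)=V(1)$, a first-order Taylor expansion with Lagrange remainder yields $V'(0)=-\tfrac{1}{2}V''(\xi)$ for some $\xi\in(0,1)$, that is $\langle g,f\rangle_{L^2}=-V''(\xi)$. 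It then remains only to control the second derivative $V''(\xi)$. Writing $\sqrt{\det(g_\tau)}=\sqrt{\det g}\,\exp\!\big(\tfrac{1}{2}\text{tr}\log(I+\tau g^{-1}f)\big)$, the second derivative of the integrand is a pointwise \emph{quadratic} expression in the entries of $f$, with coefficients depending only on $g$ and on $(I+\tau g^{-1}f)^{-1}$; integrating this gives $|V''(\xi)|\lesssim \|f\|_{L^2}^2$. Combining the three displays,
\[ c_2\,\|I_2(f)\|_{L^1(\partial_-SM)}=\langle g,f\rangle_{L^2}\le |V''(\xi)|\le C\|f\|_{L^2}^2, \]
which is the claim after absorbing $c_2$.

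The one genuinely delicate point is the uniformity of the quadratic remainder: I must ensure that the second-order Taylor remainder of $\tau\mapsto\sqrt{\det(g+\tau f)}$ is bounded by $|f|_g^2$ with a constant independent both of the base point and of $f$ ranging in the $\eps$-neighborhood. This is exactly where the smallness hypothesis $\|f\|_{\mathcal{C}^N}<\eps$ enters: it guarantees that the matrices $g+\tau f$ remain uniformly invertible for all $\tau\in[0,1]$, hence that $(I+\tau g^{-1}f)^{-1}$ is uniformly bounded and the offending coefficients are controlled. Everything else is the elementary calculus of the determinant and the bookkeeping identity $\langle g,f\rangle_g=\text{tr}_g(f)$; I do not expect any analytic subtlety beyond this uniform-invertibility remark.
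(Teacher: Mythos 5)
Your proof is correct and takes essentially the same approach as the paper: both arguments combine the identity $c_2\|I_2(f)\|_{L^1(\partial_-SM)} = \langle g,f\rangle_{L^2}$, the equality $\text{vol}(g)=\text{vol}(g')$, and a second-order expansion of $\tau \mapsto \text{vol}(g+\tau f)$ whose quadratic terms are controlled by $\|f\|_{L^2}^2$ thanks to the uniform invertibility of $g+\tau f$. The only difference is presentational: the paper cites the pointwise lower bound for $\sqrt{\det(g_\tau)}$ from Croke--Dairbekov--Sharafutdinov (their Proposition 4.1) and integrates it, whereas you obtain the same quadratic control by applying the Taylor--Lagrange formula directly to the integrated volume function $V(\tau)$.
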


\begin{proof}[Proof]
Consider a finite atlas $(U_i,\varphi_i)$ on $M$ and a partition of unity $\sum_i \chi_i = 1$ subordinated to this atlas, i.e. such that $\text{supp}(\chi_i) \subset U_i$. One has for $\tau \in [0,1]$:
\[ \begin{split} \text{vol}(g_\tau) & = \sum_i \int_{\varphi_i(U_i)} \chi_i \circ \varphi_i^{-1} \sqrt{\det(g_\tau(x))} dx, \end{split} \]
where $dx$ denotes the Lebesgue measure and $g_\tau(x)$ the matrix representing the metric in coordinates. In \cite{Croke-Dairbekov-Sharafutdinov-00}, Proposition 4.1, it is proved that for $\|f\|_{C^0} < \eps$ (which is our case), one has pointwise:
\[ \sqrt{\det(g_\tau(x))} \geq \sqrt{\det(g(x))}\left(1+\dfrac{1}{2} \tau \langle g(x),f(x) \rangle_g - \dfrac{1}{4}\tau^2 |f(x)|_g^2-C\eps \tau^3|f(x)|_g^2 \right), \]
where the inner products are computed with respect to the metric, as detailed in (\ref{eq:ps}). Inserting this into the previous integral, we obtain:
\[ \text{vol}(g_\tau) \geq \text{vol}(g) + \dfrac{1}{2}\tau \langle g, f \rangle_{L^2} - \dfrac{1}{4}\tau^2 \|f\|^2_{L^2} - C \eps \tau^3 \|f\|^2_{L^2} \]
Taking $\tau = 1$ and using the fact that $\text{vol}(g') = \text{vol}(g)$, we obtain the sought result.
\end{proof}

\begin{remark}
If $(M,g)$ were a simple manifold, then a well-known Taylor expansion (see \cite[Section 9]{Stefanov-Uhlmann-04} for instance) shows that for $x,y \in \partial M$, one has:
\[ d_{g'}(x,y)=d_g(x,y)+\dfrac{1}{2}I_2(f)(x,y)+R_g(f)(x,y), \]
where $I_2(f)(x,y)$ stands for the X-ray transform with respect to $g$ along the unique geodesic joining $x$ to $y$, $R_g(f)$ is a remainder satisfying:
\[ |R_g(f)(x,y)| \lesssim |x-y| \cdot \|f\|^2_{\mathcal{C}^1(M)} \]
As a consequence, if the two boundary distances agree, one immediately gets that
\[\|I_2(f)\|_{L^\infty(\partial_-SM)} \lesssim \|f\|^2_{\mathcal{C}^1(M)}\]
In our case, because of the trapping issues, $I_2(f)$ is not $L^\infty$ and such an estimate is hopeless. This is why we have to content ourselves with $L^1/L^2$ estimates in Lemma \ref{lem:est1} (and this will be sufficient in the end) but the idea that linearizing the problem brings an inequality with a square is unchanged.
\end{remark}

\subsection{Functional estimates}

Given a tensor $f$ defined on $M$, $E_0f$ denotes its extension by $0$ to $M_e$, whereas $r_Mf$ denotes the restriction to $M$ of a tensor defined on $M_e$. If $f \in H^{1/4}(M,\otimes^2_S T^* M)$, then $E_0f \in H^{1/4}(M_e,\otimes^2_S T^* M_e)$ (see \cite[Corollary 5.5]{Taylor-11}) and we can decompose the extension $E_0f$ into $E_0 f = q +Dp$, where $q \in H^{1/4}_{\text{sol}}(M_e,\otimes^2_S T^* M_e)$ and $p \in H^{5/4}(M_e,\otimes^2_S T^* M_e)$, with $p|_{\partial M_e}=0$.

\begin{lemma}
For any $r \geq 0$, there exists a constant $C > 0$ such that if $f \in H^{1/4}_{\text{sol}}(M,\otimes^2_S T^* M)$:
\[ \|f\|_{H^{-r}(M,\otimes^2_S T^* M)} \leq C \|q\|_{H^{-r}(M,\otimes^2_S T^* M)} \]
\end{lemma}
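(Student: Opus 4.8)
The plan is to compare the solenoidal decomposition of $E_0 f$ on $M_e$ with the (tautological) solenoidal decomposition of $f$ on $M$, and to show that the only discrepancy is a harmonic boundary field which is itself controlled by $q$.

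First I would restrict the identity $E_0 f = q + Dp$ to $M$. Since $D$ is a differential operator it commutes with the restriction $r_M$, so on $M$ one has $f = r_M q + D(r_M p)$. Applying the divergence and using that $f$ is solenoidal on $M$ (so $D^* f = 0$) gives $\Delta(r_M p) = -D^*(r_M q)$, where $\Delta = D^*D$ is the Dirichlet Laplacian on $M$. Writing $\pi_{\text{sol}} := \text{id} - D\Delta^{-1}D^*$ for the solenoidal projector on $M$ (with $\Delta^{-1}$ the Dirichlet inverse, so that $\pi_{\text{sol}}$ annihilates every potential field whose potential vanishes on $\partial M$), a direct computation combining the last two displays yields
\[ f = \pi_{\text{sol}}(r_M q) + DH, \qquad \Delta H = 0, \quad H|_{\partial M} = (r_M p)|_{\partial M}. \]
In words, $r_M q$ accounts for $f$ up to the solenoidal potential field $DH$ generated by the trace of $p$ on $\partial M$, which need \emph{not} vanish since $p$ is only normalized to vanish on $\partial M_e$. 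Because $\pi_{\text{sol}}$ is an order-zero operator built from the elliptic Dirichlet problem for $\Delta$, it is bounded on $H^{-r}(M)$ for every $r \geq 0$, giving at once $\|\pi_{\text{sol}}(r_M q)\|_{H^{-r}(M)} \lesssim \|r_M q\|_{H^{-r}(M)}$.

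The remaining and main task is to bound the harmonic correction $\|DH\|_{H^{-r}(M)}$ by $\|r_M q\|_{H^{-r}(M)}$. Here I would exploit the global structure on $M_e$: on the collar $M_e \setminus M$ the zero-extension $E_0 f$ vanishes identically, so there $q = -Dp$ with $p$ harmonic for $\Delta$ and $p|_{\partial M_e} = 0$. Consequently the boundary trace $(r_M p)|_{\partial M}$, and hence $H$ and $DH$, is determined by $q$ on the collar, which in turn is tied to $q$ near $\partial M$ through interior elliptic regularity for the solenoidal field $q$ (which obeys $D^*q = 0$) together with the smoothing produced by the passage through the harmonic extension. Equivalently, the whole argument may be packaged as the operator identity $\pi_{\text{sol}} \circ r_M \circ \pi_{\text{sol}}^e \circ E_0 = \text{id} - K$ on the space of solenoidal tensors, where $K : f \mapsto DH$ is a compact (regularizing) remainder; inverting $\text{id} - K$ then yields $\|f\|_{H^{-r}(M)} \lesssim \|\pi_{\text{sol}}(r_M q)\|_{H^{-r}(M)} \lesssim \|r_M q\|_{H^{-r}(M)}$.

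I expect the hard part to be exactly this last step. The failure of $p$ to vanish on $\partial M$ (it is only normalized on $\partial M_e$) leaves a harmonic boundary field $DH$ that the solenoidal projection on $M$ cannot see, and one must argue that this field is genuinely subordinate to $q$ rather than an independent obstruction. Concretely, the source of $p$ is $\Delta_{e} p = D^*(E_0 f)$, a layer distribution supported on $\partial M$ coming from the jump of $f$ across $\partial M$; controlling the trace of $p$ — and so $DH$ — by elliptic single-layer estimates, and then establishing injectivity of $\text{id} - K$ on solenoidal tensors (via uniqueness of the solenoidal decomposition, in the spirit of the proof of Lemma \ref{lem:ell}), is the crux of the proof.
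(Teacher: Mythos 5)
Your opening reduction is formally correct: restricting $E_0f = q + Dp$ to $M$ and applying the Dirichlet solenoidal projector does yield $f = \pi_{\mathrm{sol}}(r_M q) + DH$ with $\Delta H = 0$, $H|_{\partial M} = (r_M p)|_{\partial M}$, and $DH$ is indeed solenoidal. But the proof stops exactly where the lemma lives: the bound $\|DH\|_{H^{-r}(M)} \lesssim \|q\|_{H^{-r}(M)}$ is never established, and neither of the two mechanisms you invoke can deliver it. First, saying that $H$ is ``determined by $q$ on the collar'' would at best control $DH$ by $\|q\|_{H^{-r}(M_e\setminus M)}$, which is \emph{not} the norm on the right-hand side of the lemma; passing from the collar norm of $q$ to its norm on $M$ is precisely the content of the statement, and it requires using quantitatively that $q$ is solenoidal \emph{across} $\partial M$ --- nothing in your argument does so. Second, the Fredholm packaging needs $K : f \mapsto DH$ to be compact on $H^{-r}_{\mathrm{sol}}(M)$, but a count of orders shows $K$ has order $0$: $f \mapsto p = \Delta_e^{-1}D^*E_0 f$ gains one derivative, the trace on $\partial M$ loses $1/2$, the harmonic extension regains $1/2$, and $D$ loses one --- net zero. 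So $K$ is not regularizing up to the boundary (it is essentially a Poisson-type operator composed with a single-layer boundary operator), its compactness is not automatic, and no argument is offered. Worse, $K$ is not even well defined on $H^{-r}_{\mathrm{sol}}(M)$ for $r \geq 1/2$: $E_0$ is not bounded on $H^{s}$ for $s \leq -1/2$, and the trace of $p \in H^{1-r}(M_e)$ on $\partial M$ requires justification. Finally, even granting compactness, inverting $\mathrm{id}-K$ requires its injectivity on $H^{-r}_{\mathrm{sol}}(M)$, which you explicitly defer; proving it is essentially the whole argument (integration by parts over $M_e$ plus unique continuation), so the proposal as written is a correct reformulation of the lemma rather than a proof of it.

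For comparison, the paper proves the lemma by a soft compactness--contradiction argument in which the a priori $H^{1/4}$ hypothesis does the work your scheme is missing: take $f_n$ violating the estimate, normalize $\|f_n\|_{H^{1/4}(M)}=1$ (so $\|q_n\|_{H^{-r}(M)} \to 0$), extract limits $f, p, q$ by Rellich compactness, and identify the limit. There, $q|_M = 0$ together with global solenoidality of $q$ and $p|_{\partial M_e}=0$ gives $0 = \langle q, Dp\rangle_{L^2(M_e)} = -\|q\|^2_{L^2(M_e\setminus M)}$, hence $q \equiv 0$ on all of $M_e$; unique continuation then forces $p \equiv 0$ on $M_e\setminus M^\circ$, so $p|_{\partial M}=0$, and $D^*f=0$ forces $Dp = 0$ on $M$, hence $f=0$, a contradiction. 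Note that this integration-by-parts step is exactly the quantitative-in-the-limit use of solenoidality across $\partial M$ that your collar argument lacks, and the $H^{1/4}$ normalization supplies exactly the compactness your Fredholm alternative lacks. If you want to rescue your constructive route, you must produce these two inputs; they are the crux you named but did not resolve.
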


Actually, this lemma is valid not just for $1/4$ but for any $0 < s < 1/2$. We chose to take a specific $s$ in order to simplify the notations, and because it will be applied for a much regular $f$ which will therefore be in $H^{1/4}$. Note that, from now on, in order to simplify the notations, we will sometimes write $\|T\|_{H^{s}(M)}$ in short, instead of $\|T\|_{H^{s}(M,\otimes^2_S T^* M)}$.

\begin{proof}[Proof]
We argue by contradiction. Assume we can find a sequence of elements $f_n \in H^{1/4}_{\text{sol}}(M,\otimes^2_S T^* M)$ such that:
\[  \|f_n\|_{H^{-r}(M,\otimes^2_S T^* M)} > n \|q_n\|_{H^{-r}(M,\otimes^2_S T^* M)} \]
We can always assume that $\|f_n\|_{H^{1/4}(M)} = 1$ and thus:
\[ \|q_n\|_{H^{-r}(M)} \leq \dfrac{1}{n} \|f_n\|_{H^{-r}(M)} \lesssim \dfrac{1}{n} \|f_n\|_{H^{1/4}(M)} \rightarrow 0 \]
Now, by compactness, we can extract subsequences so that:
\[ \begin{array}{c} f_n \rightharpoonup  f \in H^{1/4}_{\text{sol}}(M,\otimes^2_S T^* M) \\
f_n \rightarrow f ~~ \text{in } L^2(M,\otimes^2_S T^* M) \\
~ \\
p_n \rightharpoonup p \in H^{5/4}(M_e,\otimes^2_S T^* M_e) \\
p_n \rightarrow p ~~ \text{in } H^{1}(M_e,\otimes^2_S T^* M_e) \\
~ \\
q_n \rightharpoonup q \in H^{1/4}_{\text{sol}}(M_e,\otimes^2_S T^* M_e) \\
q_n \rightarrow q ~~ \text{in } L^2(M_e,\otimes^2_S T^* M_e) \end{array}  \]
Remark that the decomposition $E_0 f_n = q_n + Dp_n$ implies, when passing to the limit in $L^2$, that $E_0 f = q + Dp$. Since $\|q_n\|_{H^{-r}(M)} \rightarrow 0$, we have that $q \equiv 0$ in $M$. In $M_e \setminus M$, we have $q = -Dp$. Thus:
\[ 0 = \langle D^*q,p\rangle_{L^2(M_e)} = \langle q,Dp \rangle_{L^2(M_e)} = \langle q,Dp \rangle_{L^2(M_e \setminus M)} = - \|q\|^2_{L^2(M_e \setminus M)}, \]
that is $q \equiv 0$. As a consequence, in $M_e \setminus M^\circ$, $E_0f = 0 = Dp$ and $p|_{\partial M_e} = 0$, so $p \equiv 0$ in $M_e \setminus M^\circ$ by unique continuation. Since $p \in H^{5/4}$, by the trace theorem, we obtain that $p|_{\partial M} = 0$ (in $H^{3/4}(\partial M)$). Since $f$ is solenoidal, $D^*f=0$, and
\[ 0= \langle D^*f,p \rangle_{L^2(M)} = \langle D^*Dp, p \rangle_{L^2(M)} = \|Dp\|^2_{L^2(M)} \]
Therefore, $p \equiv 0$ and, in particular, in $M$, we get that $f=0$ which is contradicted by the fact that $\|f_n\|_{H^{1/4}(M)}=1$.
\end{proof}

We recall that $I^e_2$ is assumed to be injective. Let us mention that if $u \in \mathcal{C}^\infty(M_e,\otimes^2_S T^*M_e)$ is in the kernel of $\Pi^e_2$, then:
\[ 0 = \langle \Pi^e_2 u, u \rangle = \langle {I^e_2}^*I^e_2 u, u \rangle = \|I^e_2 u\|^2, \]
that is $I^e_2 u = 0$. This will be used in the following lemma:

\begin{lemma}
\label{lem:ell}
Under the assumption that $I^e_2$ is injective, for any $r \geq 0$, there exists a constant $C > 0$ such that if $f \in H^{1/4}_{\text{sol}}(M,\otimes^2_S T^* M)$, then:
\[ \|f\|_{H^{-r-1}(M,\otimes^2_S T^* M)} \leq C \|\Pi^e_2 E_0 f\|_{H^{-r}(M_e,\otimes^2_S T^* M_e)} \]
\end{lemma}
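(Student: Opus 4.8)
The plan is to combine the ellipticity of $\Pi^e_2$ on $\ker D^*$ with the assumed $s$-injectivity of $I^e_2$, using the previous lemma to transfer everything onto the solenoidal part $q$ of $E_0 f$. First I would write the decomposition $E_0 f = q + Dp$ on $M_e$, with $q$ solenoidal and $p|_{\partial M_e} = 0$, and record the two structural identities that drive the argument. Since $I^e_2(Dp) = 0$ whenever $p|_{\partial M_e} = 0$, one has $\Pi^e_2 E_0 f = \Pi^e_2 q$; and applying the parametrix $Q^e \Pi^e_2 = \text{id} + D S^e D^* + R^e$ to $q$ and using $D^* q = 0$ gives $q = Q^e \Pi^e_2 q - R^e q$. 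Taking $H^{-r-1}(M_e)$ norms, and using that $Q^e$ has order $1$ while $R^e$ is smoothing, yields the a priori estimate
\[ \|q\|_{H^{-r-1}(M_e)} \lesssim \|\Pi^e_2 E_0 f\|_{H^{-r}(M_e)} + \|q\|_{H^{-r-2}(M_e)}. \]
Combined with the previous lemma, $\|f\|_{H^{-r-1}(M)} \lesssim \|q\|_{H^{-r-1}(M)} \leq \|q\|_{H^{-r-1}(M_e)}$, this reduces the statement to absorbing the weaker-norm term $\|q\|_{H^{-r-2}(M_e)}$.

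Getting rid of that remainder is the heart of the matter and the step I expect to be the main obstacle: because $H^{-r-2}$ is weaker than $H^{-r-1}$, the term cannot be moved to the left-hand side directly, so one must trade it for compactness plus injectivity. I would argue by contradiction. If the claimed inequality failed, there would be a sequence $f_n \in H^{1/4}_{\text{sol}}(M)$ whose solenoidal parts $q_n$ (of $E_0 f_n$) satisfy $\|q_n\|_{H^{-r-1}(M_e)} = 1$ while $\|\Pi^e_2 E_0 f_n\|_{H^{-r}(M_e)} = \|\Pi^e_2 q_n\|_{H^{-r}(M_e)} \to 0$; this normalization is legitimate since the previous lemma controls $\|f_n\|_{H^{-r-1}(M)}$ by $\|q_n\|_{H^{-r-1}(M_e)}$. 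By Rellich, the bounded sequence $\{q_n\}$ has a subsequence with $q_n \rightharpoonup q$ in $H^{-r-1}(M_e)$ and $q_n \to q$ strongly in $H^{-r-2}(M_e)$, and $q$ is again solenoidal.

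To close the contradiction I would feed this back into $q_n = Q^e \Pi^e_2 q_n - R^e q_n$. The first term tends to $0$ in $H^{-r-1}$ since $\|\Pi^e_2 q_n\|_{H^{-r}} \to 0$ and $Q^e$ has order $1$, while the second converges strongly, $R^e q_n \to R^e q$ in $H^{-r-1}$, because $R^e$ is smoothing and $q_n \to q$ in $H^{-r-2}$. Hence $q_n \to -R^e q$ strongly in $H^{-r-1}$; matching with the weak limit forces $q = -R^e q$, so the convergence is in fact strong with $\|q\|_{H^{-r-1}(M_e)} = 1$, and $q = -R^e q$ is smooth by the smoothing property of $R^e$. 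Passing to the limit also gives $\Pi^e_2 q = 0$, hence $\|I^e_2 q\|^2 = \langle \Pi^e_2 q, q \rangle = 0$, i.e. $I^e_2 q = 0$. As $q$ is smooth and solenoidal, the $s$-injectivity of $I^e_2$ forces $q \equiv 0$, contradicting $\|q\|_{H^{-r-1}(M_e)} = 1$.

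Two points demand the care that the quoted results are meant to supply, and are the only genuine subtleties beyond this scheme: the parametrix lives on $M_e^\circ$, so its use on $q_n$ (which is not compactly supported in $M_e^\circ$) must be arranged with the cutoff functions as in \cite{Guillarmou-17-2}, the resulting errors being swept into the smoothing remainder; and the identity $\langle \Pi^e_2 q, q \rangle = \|I^e_2 q\|^2$ is invoked only after the bootstrap $q = -R^e q \in \mathcal{C}^\infty$ has upgraded $q$ to a smooth tensor, so that the smooth-category definition of $s$-injectivity applies verbatim.
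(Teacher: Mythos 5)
Your skeleton (cutoff parametrix for $\Pi^e_2$ on $\ker D^*$, Rellich compactness, contradiction, then $s$-injectivity applied to a smooth limit) is the same as the paper's, and running the contradiction at the level of $q$ rather than $f$ would even spare the paper's unique-continuation step for the potential part. But the point you defer to "the cutoffs as in \cite{Guillarmou-17-2}" is a genuine gap, not a formality: the cutoff errors are \emph{not} smoothing. Writing $\chi q = q - (1-\chi)q$ and using $Q\chi\Pi^e_2\chi = \chi^2 + D\chi S\chi D^* + R$ together with $D^*q = 0$, what you actually obtain is
\[
\chi^2 q \;=\; Q\chi\Pi^e_2 q \;-\; Q\chi\Pi^e_2\bigl((1-\chi)q\bigr)\;-\;Rq,
\]
and the operator $Q\chi\Pi^e_2(1-\chi)$ has order $0$, not $-\infty$: the supports of $\chi$ and $1-\chi$ overlap in the transition annulus $\{0<\chi<1\}$, where its kernel is singular on the diagonal. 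This term is exactly as strong as the quantity you are trying to estimate and cannot be absorbed; it becomes smoothing only after composition with $r_M$, because $M$ is at positive distance from $\mathrm{supp}(1-\chi)$ and pseudodifferential kernels are smooth off the diagonal. Consequently your a priori estimate holds only for $\|r_M q\|_{H^{-r-1}(M)}$, never for $\|q\|_{H^{-r-1}(M_e)}$: near $\partial M_e$ the parametrix tells you nothing about $q$.

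This collapses the contradiction scheme at its two crucial points. First, the bootstrap $q_n \to -R^e q$ strongly in $H^{-r-1}(M_e)$ uses the unavailable global identity; with cutoffs you only get strong convergence of $r_M q_n$, while on $M_e$ you retain mere weak convergence, for which the norm is only lower semicontinuous — so you cannot conclude $\|q\|_{H^{-r-1}(M_e)} = 1$, and $q = 0$ is perfectly consistent with everything you derived: no contradiction. Second, your route to smoothness of the limit, $q = -R^e q$, fails for the same reason; you get at best $q \in \mathcal{C}^\infty(M_e^\circ)$ with no control at $\partial M_e$, and then neither the pairing $\langle \Pi^e_2 q, q\rangle = \|I^e_2 q\|^2$ nor $s$-injectivity (stated for tensors smooth on $M_e$ up to the boundary) applies. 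Note how the paper is structured precisely to avoid this: it applies the parametrix to $\chi E_0 f = E_0 f$ (an exact identity, since $\mathrm{supp}\,E_0 f \subset \{\chi = 1\}$) rather than to $\chi q \neq q$, so the only boundary error is the commutator term $r_M Q\chi\Pi^e_2[\chi,D]\Delta^{-1}D^*E_0 f$, supported in the annulus and hence compact — in other words it keeps the potential part $p$ instead of discarding it at the start, because in the annulus $p$ (an elliptic solve with data supported in $M$) is controllable while $(1-\chi)q$ is not; it normalizes $\|f_n\|_{H^{-r-1}(M)} = 1$ on $M$; and in the limiting injectivity step it recovers smoothness of $q$ up to $\partial M_e$ from $q = -Dp$ on $M_e\setminus M$ plus Dirichlet elliptic regularity. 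If you want to keep your variant, you must normalize on $M$, retain $p_n$ throughout, and reinstate that boundary-regularity argument for the limit.
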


\begin{proof}
Let $\chi$ be a smooth positive function supported within $M^\circ_e$ such that $\chi \equiv 1$ in a vicinity of $M$. We know by \cite{Guillarmou-17-2}, that there exists pseudodifferential operators $Q,S,R$ with respective order $1,-2,-\infty$ on $M^\circ_e$ such that:
\[ Q \chi \Pi^e_2 \chi = \chi^2 + D\chi S \chi D^* + R\]
Let us decompose $E_0f = q + Dp$, where $q \in H^{1/4}_{\text{sol}}(M_e,\otimes^2_S T^* M_e)$ and $Dp$ is the potential part given by $p:=\Delta^{-1}D^*E_0f$, $\Delta = D^*D$ being the Laplacian with Dirichlet conditions. Remark that $\chi E_0 f = E_0 f$, and
\[
\begin{split}
r_M Q \chi \Pi^e_2 (E_0 f) & = r_M Q \chi \Pi^e_2 (\chi E_0 f) \\
& = r_M Q \chi \Pi^e_2 \chi (q) + \underbrace{r_M Q \chi \Pi^e_2 D (\chi p)}_{=0} + r_M Q \chi \Pi^e_2 [\chi,D] (p) \\
& = r_M(q) + r_M R (q) + r_M Q \chi \Pi^e_2 [\chi,D] \Delta^{-1}D^*E_0(f)
\end{split}
\]
Note that $[\chi,D]$ is a differential operator supported in the annulus $\left\{ \nabla \chi \neq 0 \right\}$. In particular, $r_M T := r_M Q \chi \Pi^e_2 [\chi,D] \Delta^{-1}D^*E_0 : H^{-r-1} \rightarrow H^{-r-1}$ is a well-defined compact operator on $M$. Using the previous lemma, we obtain:
\[ \begin{split} \|f\|_{H^{-r-1}(M)} & \lesssim \|q\|_{H^{-r-1}(M)} \\
& \lesssim \|r_M Q \chi \Pi^e_2 E_0 f\|_{H^{-r-1}(M)} + \|r_M R q\|_{H^{-r-1}(M)} + \|r_M T f\|_{H^{-r-1}(M)}  \\
& \lesssim \|\Pi^e_2 E_0 f\|_{H^{-r}(M_e)} + \|r_M R q\|_{H^{-r-1}(M)} + \|r_M T f\|_{H^{-r-1}(M)} \end{split} \]
In other words, there exists a constant $C > 0$ such that:
\be \label{eq:cauchy} \|f\|_{H^{-r-1}(M)} \leq C (\|\Pi^e_2 E_0 f\|_{H^{-r}(M_e)} + \|r_M R q\|_{H^{-r-1}(M)} + \|r_M T f\|_{H^{-r-1}(M)}) \ee

The rest of the proof now boils down to a standard argument of functional analysis. Assume by contradiction that we can find a sequence of elements $f_n \in H^{1/4}(M,\otimes^2_S T^* M)$ such that 
\[ \|f_n\|_{H^{-r-1}(M,\otimes^2_S T^* M)} > n \|\Pi^e_2 E_0 f_n\|_{H^{-r}(M_e,\otimes^2_S T^* M_e)} \]
We can always assume that $\|f_n\|_{H^{-r-1}} = 1$ and thus $\|\Pi^e_2 E_0 f_n\|_{H^{-r}} \rightarrow 0$. By construction, $\|q_n\|_{H^{-r-1}} \lesssim \|f_n\|_{H^{-r-1}} = 1$, i.e. $(q_n)$ is bounded in $H^{-r-1}$. Moreover, since $r_M R$ and $r_M T$ are compact, we know that up to a subsequence $r_M R q_n \rightarrow v_1, r_M T f_n \rightarrow v_2$, with $v_1, v_2 \in H^{-r-1}(M,\otimes^2_S T^* M)$. As a consequence, $(r_M R q_n)_{n \geq 0}, (r_M T f_n)_{n \geq 0}$ are Cauchy sequences and applying (\ref{eq:cauchy}) with $f_n-f_m$, we obtain that $(f_n)_{n \geq 0}$ is a Cauchy sequence too. It thus converges to an element $f \in H^{-r-1}_{\text{sol}}(M,\otimes^2_S T^* M)$ which satisfies $\Pi^e_2 E_0 f = 0$. But we claim that $\Pi^e_2 E_0$ is injective on $H^{-r-1}_{\text{sol}}(M,\otimes^2_S T^* M)$. Assuming this claim, this implies that $f = 0$, which contradicts the fact that $\|f_n\|_{H^{-r-1}} = 1$.

Let us now prove the injectivity. It is the exact same argument as the one given in \cite[Lemma 2.6]{Lefeuvre-18-1} but we reproduce it here for the reader's convenience. Assume $\Pi^e_2 E_0 f = 0$ for some $f \in H^{-r-1}_{\text{sol}}(M,\otimes^2_S T^* M)$. Since $E_0f$ has compact support within $M^\circ_e$, we can still make sense of the decomposition $E_0f = q +Dp$, where $p:=\Delta^{-1}D^*E_0f \in H^{-r}(M_e,\otimes^2_S T^* M_e)$, $\Delta := D^*D$ is the Laplacian with Dirichlet conditions and $q:=E_0f-Dp \in H^{-r-1}_{\text{sol}}(M_e,\otimes^2_S T^* M_e)$ (in the sense that $D^*q = 0$ in the sense of distributions). By ellipticity of $\Delta$, $p$ has singular support contained in $\partial M$ (since $\Delta p = D^*E_0f$), and the same holds for $Dp$. Moreover: 
\[ \Pi^e_2(E_0f) = 0 = \Pi^e_2(q) + \Pi^e_2(Dp) = \Pi^e_2(q) \]
From $q=-Dp$ on $M_e \setminus M$, we see that $q$ is smooth on $M_e \setminus M$ and since it is solenoidal on $M_e$ and in the kernel of $\Pi^e_2$, it is smooth on $M_e^\circ$ (this stems from the ellipticity of $\Pi^e_2$ on $\ke D^*$). As a consequence, $q \in \mathcal{C}^\infty_{\text{sol}}(M_e,\otimes^2_S T^*M_e) \cap \ker I^e_2$ and thus $q=0$ by $s$-injectivity of the X-ray transform. We have $E_0f = Dp$ and $E_0f = 0$ on $M_e \setminus M$, $p|_{\partial M_e} = 0$. By unique continuation, we obtain that $p = 0$ in $M_e \setminus M$. Now, by ellipticity, one can also find pseudo-differential operators $Q, S, R$ on $M_e^\circ$ of respective order $1,-2,-\infty$, such that:
\[  Q \Pi^e_2 = \text{id}_{M^\circ_e} +  D S D^* + R, \]
where $S$ is a parametrix of $D^*D$. Since $E_0f = Dp$ has compact support in $M^\circ_e$, we obtain:
\[ \begin{split} Q \Pi^e_2 E_0 f & = 0 \\
& = Q \Pi^e_2 Dp \\
& = Dp + D S D^* Dp + Rp \\
& = 2Dp + \text{ smooth terms} \end{split} \]
This implies that $E_0f = Dp$ is smooth on $M_e$, vanishes on $\partial M$. Therefore:
\[ \langle f, f \rangle_{L^2(M)}  = \langle f , Dp \rangle = \langle D^* f , p \rangle = 0, \]
that is $f \equiv 0$.
\end{proof}

For $s \in \R$, we define $H^{s}_{\text{inv}}(SM)$ to be the set of $u \in H^{s}(SM)$ such that $X u = 0$ (in the sense of distributions if $s < 1$). The following lemma will allow us some gain in the "battle" of exponents in the proof of the Theorem. 

\begin{lemma}
\label{lem:gego}
For all $s \in \R$, $m \geq 0$,
\[{\pi_m}_* :  H^{s}_{\text{inv}}(SM) \rightarrow H^{s+1/2}(M, \otimes^m_S T^* M)\]
is bounded (and the same result holds for $M_e$).
\end{lemma}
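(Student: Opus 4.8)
The plan is to prove the lemma by a microlocal analysis of the fibre integration ${\pi_m}_*$, exploiting crucially that the invariance $Xu=0$ confines the singularities of $u$. First I would record the baseline bound, valid for \emph{all} distributions and not just invariant ones: the fibre integration is bounded ${\pi_m}_* : H^s(SM) \to H^s(M,\otimes^m_S T^*M)$ for every $s \in \R$ (for $s \le 0$ this is dual to the boundedness of $\pi_m^*$ on $H^{-s}$ recorded in \S\ref{ssect:xray}; for $s \ge 0$ it is the standard pushforward estimate along the Riemannian submersion $\pi_0 : SM \to M$). Thus the entire content of the lemma is the extra half-derivative gained on the invariant subspace $H^{s}_{\mathrm{inv}}(SM)$.

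Next I would use the invariance microlocally. Since $X$ is a real, nonvanishing vector field, it is a first-order operator elliptic off its characteristic set $\Sigma := \{(z,\xi) \in T^*(SM)\setminus 0 : \xi(X(z)) = 0\}$, so $Xu=0$ forces $\mathrm{WF}(u) \subset \Sigma$. On the other hand, ${\pi_m}_*$ is a tensorial version of the pushforward along the sphere bundle, whose canonical relation propagates only those singularities of $u$ lying in the conormal bundle to the fibres, i.e. in the horizontal covectors $\xi = d\pi_0^\top \eta$ that are pullbacks of base covectors $\eta \in T^*M$. The key observation is that for such a horizontal covector over a point $(x,v)$, the defining condition of $\Sigma$ becomes $\eta(v)=0$: the relevant singular directions of $u$ therefore sit only over the equatorial spheres $\{v \in S_xM : \eta(v)=0\}$, a codimension-one subset of each fibre.

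The heart of the proof is then a clean-intersection computation. Away from the equator $u$ is microlocally smooth, so that part of the fibre integral is harmless; near the equator I would parametrise the fibre transversally, localise $\int_{S_xM}(\cdots)\,dv$ to a conic neighbourhood of $\{\eta(v)=0\}$, and apply the clean-intersection (stationary-phase) calculus for the composition of the pushforward with a distribution whose wavefront set meets the conormal bundle of the fibres inside $\Sigma$. The excess codimension produced by the restriction to the equator yields exactly the half-derivative improvement, upgrading the baseline $H^s \to H^s$ bound to $H^{s}_{\mathrm{inv}}(SM) \to H^{s+1/2}(M,\otimes^m_S T^*M)$; the tensorial weights built into ${\pi_m}_*$ affect only constants and lower-order symbols, not the order, and the case $m \ge 1$ reduces to this fibre-integration mechanism.

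The main obstacle is precisely this last step: making the clean-intersection estimate quantitative and uniform so that the gain is exactly $+1/2$ (and not merely some unspecified positive amount), and validating it at negative regularity $s<0$, where one argues by duality against the pullback on the complementary scale. As a consistency check, the value $+1/2$ is forced by the factorisation $\Pi_m = {\pi_m}_* I^* I \pi_m^*$ from \S\ref{ssect:xray}: since $\Pi_m$ has order $-1$, $\pi_m^*$ is neutral on Sobolev scales, and $I^* u$ is automatically flow-invariant, the half-derivative gained by ${\pi_m}_*$ on invariant distributions is the exact counterpart of the half-derivative gained by $I^*$ when it maps boundary data into the invariant class.
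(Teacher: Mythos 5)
Your geometric picture is the right one --- invariance confines $\mathrm{WF}(u)$ to the characteristic set $\Sigma=\{\xi(X)=0\}$, the pushforward only sees horizontal covectors, and for a horizontal covector $\eta\circ d\pi_0$ over $(x,v)$ membership in $\Sigma$ reads $\eta(v)=0$, so the ``dangerous'' directions form a codimension-one equator in each fibre. This is precisely the heuristic behind the lemma. But the step you yourself flag as the main obstacle is a genuine gap, not a technicality: wavefront-set calculus is qualitative and cannot, by itself, produce a Sobolev gain. Knowing that $\mathrm{WF}(u)$ sits inside $\Sigma$ gives no quantitative decay of localized Fourier transforms, and the ``clean-intersection / stationary-phase'' machinery you invoke computes orders for compositions of FIOs or for \emph{Lagrangian} distributions; a general element of $H^s_{\text{inv}}(SM)$ is not Lagrangian (and $\Sigma$ is a hypersurface, not a Lagrangian), so there is no off-the-shelf calculus that turns your picture into the estimate $H^s_{\text{inv}}\to H^{s+1/2}$. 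Making that step rigorous amounts to proving, for each base frequency $\eta$, a quantitative splitting of the fibre integral into the region $|\eta(v)|\leq\delta|\eta|$ (small measure $\sim\delta$) and its complement (where $X$ is elliptic with symbol $\eta(v)$, usable only if one controls $Xu$), and optimizing in $\delta$ --- i.e., it amounts to proving the velocity averaging lemma. Your duality suggestion for $s<0$ has a similar problem: the dual of $H^s_{\text{inv}}$ is a quotient, and the pairing does not reduce the invariant-class statement to boundedness of $\pi_m^*$ on the complementary scale.

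The paper closes exactly this gap by citation rather than by microlocal analysis: it writes ${\pi_m}_*f$ in coordinates as the fibre average $x\mapsto\int_{S_xM}f(x,\xi)\,\xi^J\,dS_x(\xi)$, applies G\'erard--Golse's averaging theorem (which is the quantitative statement $\|u\|_{H^{s+1/2}}\lesssim\|f\|_{H^s}+\|Xf\|_{H^s}$ under the transversality of $X$ to the fibres), and then handles distributional regularity and the invariant class by density: by \cite[Lemma E.47]{Dyatlov-Zworski-book-resonances} any $f\in H^s_{\text{inv}}$ is a limit of smooth $f_n$ with $f_n\to f$ and $Xf_n\to Xf=0$ in $H^s$, so the estimate passes to the limit with the $\|Xf\|$ term vanishing. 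So your proposal identifies the correct mechanism but leaves its quantitative core unproved; to complete it you should either carry out the frequency-space splitting above or, as the paper does, invoke the averaging lemma directly, and replace the duality step by the density argument.
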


\begin{proof}[Proof]
We fix $s \in \R$. The idea is to see ${\pi_m}_*$ as an averaging operator in order to apply Gérard-Golse's result of regularity (\cite[Theorem 2.1]{Gerard-Golse-92}). In local coordinates, given $f \in \mathcal{C}^\infty(SM)$, one has (see \cite[Section 2]{Paternain-Zhou-16} for instance) :
\[ {\pi_m}_* f(x)_{i_1 ... i_m} = g_{i_1 j_1}(x) ... g_{i_m j_m}(x) \int_{S_xM} f(x,\xi) \xi^J dS_x(\xi), \]
where $\xi^J = \xi^{j_1} ... \xi^{j_m}$. It is thus sufficient to prove that the $H^{s+1/2}$-norm of each of these coordinates is controlled by the $H^s$-norm of $f$. Since $(M,g)$ is smooth, it is actually sufficient to control the $H^{s+1/2}$-norm of the integral. Note that
\be \label{eq:c2} \|X (f\xi^J) \|_{H^s(SM)} \lesssim \|f\|_{H^s(SM)} + \|Xf\|_{H^s(SM)} \ee
Since $X$ satisfies the transversality assumption of \cite[Theorem 2.1]{Gerard-Golse-92}, we conclude that $u : x \mapsto \int_{S_xM} f(x,\xi) \xi^J dS_x(\xi)$ is in $H^{s+1/2}(M)$. By (\ref{eq:c2}), we also know that its $H^{s+1/2}$-norm is controlled by
\be
\label{eq:control}
\|u\|_{H^{s+1/2}(SM)} \lesssim \|f\|_{H^s(SM)} + \|Xf\|_{H^s(SM)}.
\ee

Now, if $f \in H^s_{\text{inv}}(SM)$, there exists by \cite[Lemma E.47]{Dyatlov-Zworski-book-resonances} a sequence of smooth functions $f_n \in \mathcal{C}^\infty(SM)$ such that $f_n \rightarrow f, Xf_n \rightarrow Xf=0$ in $H^s(SM)$. We obtain the sought result by passing to the limit in (\ref{eq:control}).
\end{proof}

We will apply this lemma with $m=2$. The following result is proved in \cite[Proposition 2.2]{Lefeuvre-18-1}:

\begin{lemma}
\label{lem:est3}
Let $1 < q < p < + \infty$. Then $I : L^p(SM) \rightarrow L^q(\partial_-SM)$ and $I^* : L^p(\partial_-SM) \rightarrow L^q(SM)$ are bounded, and the same statement holds for $M_e$.
\end{lemma}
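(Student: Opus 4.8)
The plan is to read off the bound for $I^*$ directly from Santaló's formula and Hölder's inequality, and to obtain the bound for $I$ by duality. The one analytic input underlying both is the finiteness of all moments of the forward escape time on the incoming boundary: for every finite $r \geq 1$,
\be
\label{eq:moment}
\int_{\partial_-SM} l_+(x,v)^r \, d\mu_\nu(x,v) < + \infty .
\ee
This is the only place where hyperbolicity of the trapped set enters quantitatively, and it is the crux of the argument (see the last paragraph).

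Granting (\ref{eq:moment}), the estimate for $I^*$ is immediate. For $u \in \mathcal{C}_c^\infty(\partial_-SM \setminus \Gamma_-)$, the function $I^*u(x,v) = u(\varphi_{l_-(x,v)}(x,v))$ records the value of $u$ at the backward entry point of the geodesic through $(x,v)$, hence is constant along each orbit of $\varphi_t$. Since $\mu(\Gamma_\pm) = 0$ and $\tilde{\mu}(\Gamma_\pm \cap \partial_\pm SM) = 0$ by Proposition \ref{prop:hyp}, Santaló's formula applies and gives
\[ \|I^*u\|_{L^q(SM)}^q = \int_{\partial_-SM} \left( \int_0^{l_+(x,v)} |u(x,v)|^q \, dt \right) d\mu_\nu(x,v) = \int_{\partial_-SM} l_+(x,v)\, |u(x,v)|^q \, d\mu_\nu(x,v) . \]
Hölder's inequality with the conjugate exponents $p/q$ and $p/(p-q)$ then yields
\[ \|I^*u\|_{L^q(SM)}^q \leq \|u\|_{L^p(\partial_-SM)}^q \left( \int_{\partial_-SM} l_+^{\,p/(p-q)} \, d\mu_\nu \right)^{(p-q)/p}, \]
whose last factor is finite by (\ref{eq:moment}) with $r = p/(p-q)$. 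Since $\Gamma_- \cap \partial_-SM$ is $d\mu_\nu$-null, $\mathcal{C}_c^\infty(\partial_-SM\setminus\Gamma_-)$ is dense in $L^p(\partial_-SM)$, and extending by continuity proves that $I^*: L^p(\partial_-SM) \to L^q(SM)$ is bounded for all $1 < q < p < +\infty$.

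The bound for $I$ now follows by duality. By the defining relation $\langle If, u\rangle_{L^2(\partial_-SM, d\mu_\nu)} = \langle f, I^*u \rangle_{L^2(SM, d\mu)}$, the Banach-space adjoint of the bounded operator $I^*: L^p(\partial_-SM) \to L^q(SM)$ coincides with $I$ on a dense class, so $I$ extends to a bounded operator $L^{q'}(SM) \to L^{p'}(\partial_-SM)$ with $p' < q'$; as $(q', p')$ sweeps out all pairs with $1 < p' < q' < +\infty$, this is exactly the claim $I: L^p(SM) \to L^q(\partial_-SM)$ for $1 < q < p < +\infty$. (One could equally argue directly: pointwise Hölder gives $|If(x,v)| \leq l_+(x,v)^{1/p'} \left( \int_0^{l_+(x,v)} |f(\varphi_t(x,v))|^p \, dt \right)^{1/p}$, and after raising to the power $q$ a second Hölder with exponents $p/(p-q)$ and $p/q$, combined with Santaló's formula, turns one factor into $\|f\|_{L^p(SM)}^q$ while leaving the finite factor $\big( \int_{\partial_-SM} l_+^{\,q(p-1)/(p-q)} \, d\mu_\nu \big)^{(p-q)/p}$.) The extension $M_e$ has the very same hyperbolic trapped set $K$, so (\ref{eq:moment}) and the entire argument carry over to $M_e$ verbatim.

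The main obstacle, and the only substantial point, is (\ref{eq:moment}). Proposition \ref{prop:hyp} only provides $\tilde{\mu}(\Gamma_\pm \cap \partial_\pm SM) = 0$, i.e. that $l_+ < +\infty$ almost everywhere; this does not suffice, because the strict inequality $q < p$ forces the exponent $p/(p-q)$ (respectively $q(p-1)/(p-q)$ for $I$) to grow without bound as $q \to p$, so all finite moments of $l_+$ are genuinely needed. They are supplied by the quantitative form of hyperbolicity: the hyperbolic splitting (\ref{eq:split}) forces the set of boundary points whose geodesic stays in $SM$ up to time $T$ to shrink exponentially, $\tilde{\mu}\big(\{(x,v) \in \partial_-SM : l_+(x,v) > T\}\big) \lesssim e^{-cT}$ for some $c > 0$, as established in \cite{Guillarmou-17-2}. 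Because $d\mu_\nu$ has bounded density with respect to $\tilde{\mu}$, integrating $r\,T^{r-1} e^{-cT}$ over $T \in (0,+\infty)$ delivers (\ref{eq:moment}) for every $r$ and closes the argument.
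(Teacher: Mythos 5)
Your proof is correct and follows essentially the same route as the paper, which does not prove this lemma in-text but defers to \cite[Proposition 2.2]{Lefeuvre-18-1}, where the argument is precisely yours: H\"older along each orbit, Santal\'o's formula, and finiteness of all moments of $l_+$ on $\partial_-SM$ deduced from the exponential escape rate of a hyperbolic trapped set established in \cite{Guillarmou-17-2}. One minor point of attribution: the escape-rate estimate in \cite{Guillarmou-17-2} is stated for the Liouville volume of $\{z \in SM : l_+(z) \geq T\}$ rather than for the boundary measure you quote, but your boundary version follows from it at once, since Santal\'o's formula gives $\mu(\{z \in SM : l_+(z) > T\}) = \int_{\partial_-SM} \max(l_+(x,v)-T,0) \, d\mu_\nu(x,v) \geq \mu_\nu(\{(x,v) \in \partial_-SM : l_+(x,v) > T+1\})$.
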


Eventually, the following lemma is stated for Sobolev spaces in \cite[Lemma 6.2]{Paternain-Zhou-16}, but the same result holds for Lebesgue spaces. The proof relies on the fact that, by construction of the extension $M_e$, there exists a maximum time $L < +\infty$ for a point in $\partial_-SM_e$ to either exit $SM_e$ or to hit $\partial_-SM$.

\begin{lemma}
\label{lem:est2}
Let $1 \leq p < + \infty$. There exists a constant $C > 0$ such that if $f \in L^1(M,\otimes^2_S T^*M)$ is a section such that $I_2(f) \in L^p(\partial_-SM)$ and $E_0f$ denotes its extension by $0$ to $M_e$, one has:
\be \|I^e_2(E_0f)\|_{L^p(\partial_-SM_e)} \leq C \|I_2(f)\|_{L^p(\partial_-SM)} \ee
\end{lemma}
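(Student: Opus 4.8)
The plan is to trace, for almost every starting covector $(x,v)\in\partial_-SM_e$, how the geodesic of $M_e$ issued from $(x,v)$ meets $M$, and to express $I^e_2(E_0f)(x,v)$ as a sum of values of $I_2(f)$ read off at the successive entry points of this geodesic into $SM$. First I would set up the pointwise identity: since $E_0f$ vanishes outside $M$ and the metrics agree on $M$, the integrand $\pi_2^*(E_0f)\circ\varphi^e_t$ is supported on the times at which the $M_e$-geodesic lies in $SM$. By Proposition~\ref{prop:hyp} the tails $\Gamma_\pm$ are $d\mu^e_\nu$-negligible, so for a.e. $(x,v)$ the forward geodesic is non-trapped and crosses $\partial_-SM$ at finitely many entry times; denoting the corresponding entry points $z_1(x,v),\dots,z_{N(x,v)}(x,v)\in\partial_-SM$, the fundamental theorem of calculus on each maximal sub-arc contained in $SM$ gives
\[
I^e_2(E_0f)(x,v)=\sum_{j=1}^{N(x,v)}I_2(f)\big(z_j(x,v)\big),
\]
which in particular vanishes when the geodesic never meets $M$.

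Next I would exploit the finite-time structure of $M_e$. The construction guarantees that every connected geodesic ray lying in the collar $SM_e\setminus SM^\circ$ has length at most $L$; combined with the strict convexity of $\partial M$, this bounds the number of entries uniformly, $N(x,v)\le N_0$ for some $N_0<+\infty$ independent of $(x,v)$. Writing the entry points as iterates $z_j=S^{\,j-1}(z_1)$ of the first-return map $S$ of the $M_e$-flow to $\partial_-SM$, and $z_1=\beta(x,v)$ for the first-entry map $\beta:\partial_-SM_e\to\partial_-SM$, I would use that both $\beta$ and $S$ are a.e. injective and preserve the flux measures $d\mu^e_\nu$ and $d\mu_\nu$, these being the cross-sectional measures induced by the Liouville-invariant flow on $SM_e$ (Santaló's formula). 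Consequently $\beta$ pushes $d\mu^e_\nu$ onto $d\mu_\nu$ restricted to its image $A:=\beta(\partial_-SM_e)$, and the iterates $S^{\,j}$ map $A$ bijectively and measure-preservingly onto pairwise disjoint sets $S^{\,j}(A)$ whose union is a.e. all of $\partial_-SM$.

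I would then conclude by combining these two ingredients with Hölder's inequality. Setting $h:=I^e_2(E_0f)\big|_{\partial_-SM}$, the identity above reads $h(a)=\sum_{j\ge0}I_2(f)(S^{\,j}a)$ with at most $N_0$ nonzero terms, so $|h(a)|^p\le N_0^{\,p-1}\sum_{j\ge0}|I_2(f)(S^{\,j}a)|^p$. Integrating over $A$, using the measure-preservation of $S$ together with the partition $\partial_-SM=\bigsqcup_j S^{\,j}(A)$, and finally transporting back to $\partial_-SM_e$ through the measure-preserving map $\beta$, I obtain
\[
\|I^e_2(E_0f)\|^p_{L^p(\partial_-SM_e)}=\int_A|h|^p\,d\mu_\nu\le N_0^{\,p-1}\sum_{j\ge0}\int_{S^{\,j}(A)}|I_2(f)|^p\,d\mu_\nu=N_0^{\,p-1}\,\|I_2(f)\|^p_{L^p(\partial_-SM)},
\]
which is the claim with $C=N_0^{(p-1)/p}$.

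The main obstacle is the geometric bookkeeping underlying the middle step: justifying that the first-entry and first-return maps are defined almost everywhere, are essentially injective, and transport the flux measures exactly (so that no Jacobian factors are lost), and above all establishing the uniform bound $N_0$ on the number of times a single geodesic of $M_e$ can enter $M$. This is precisely where the finiteness of the escape time $L$ in the collar, together with the strict convexity of $\partial M$, is indispensable. For $p=1$ the Hölder factor is trivial and the argument goes through using only measure-preservation and the pointwise identity; but for $p>1$ the uniform crossing bound is genuinely needed to keep the multiplicity under control.
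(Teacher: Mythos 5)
Your overall architecture --- expressing $I^e_2(E_0f)(x,v)$ through the entry points of the $M_e$-geodesic into $SM$, transporting the flux measures by Liouville invariance (Santal\'o), and controlling the multiplicity of crossings --- is the right one, and it is essentially the mechanism behind the proof the paper points to (the paper gives no proof of Lemma~\ref{lem:est2}: it cites \cite[Lemma 6.2]{Paternain-Zhou-16}, stated there for Sobolev norms, and remarks that it adapts to Lebesgue norms). For $p=1$ your argument is essentially complete, since measure preservation of the entry maps and a.e.\ finiteness of the number of crossings are all that is needed there. But for $p>1$ the whole proof rests on the uniform bound $N(x,v)\le N_0$, and this is precisely the step you do not prove: you assert that it follows from the length bound $L$ on connected geodesic rays in $SM_e\setminus SM^\circ$ together with strict convexity of $\partial M$. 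That derivation does not work as stated. The bound $L$ controls the length of each collar excursion but not their number: the segments inside $SM$ can be arbitrarily short (near-tangential crossings), and the total escape time from $SM_e$ is unbounded because of trapping, so no counting argument of the type ``finite total time divided by a lower bound per crossing'' is available. Strict convexity of $\partial M$ by itself is only a local statement at tangency points; alone it does not exclude a collar segment running from $\partial_+SM$ back to $\partial_-SM$.

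What actually closes the gap is a stronger property built into the construction of $M_e$ (see \cite{Guillarmou-17-2}, Section 2.1): the collar is taken so thin that strict convexity propagates to the entire foliation by level sets of a boundary defining function $\rho$. Along any geodesic $\gamma$ in the collar, $\rho\circ\gamma$ then has strictly positive second derivative at each critical point, hence no interior local maximum; so once a geodesic leaves $M$, the function $\rho$ increases monotonically until the geodesic exits $M_e$, and it never re-enters $M$. In other words $N_0=1$: your first-return map $S$ has empty domain, the first-entry map $\beta$ is a measure-preserving bijection (up to sets of measure zero, using Proposition~\ref{prop:hyp}) between the set $U\subset\partial_-SM_e$ of points whose geodesic meets $M$ and $\partial_-SM$, and the lemma reduces to the single change of variables $I^e_2(E_0f)=I_2(f)\circ\beta$ on $U$, zero elsewhere, giving the estimate with $C=1$ for every $p$ --- the H\"older step becomes unnecessary. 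So your proof is repairable, but the point you yourself flag as ``the main obstacle'' is genuinely missing, and the justification you sketch for it would not suffice; the correct input is the no-return property of the extension, not the time bound $L$.
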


\section{End of the proof}

We now have all the ingredients to conclude the proof of Theorem \ref{th1}. Note that there are arbitrary choices made as to the functional spaces considered. The bounds we obtain are clearly not optimal, but this is of no harm as to the content of the theorem. In particular, we are limited by the Sobolev injection used in the proof, which depends on the dimension: this is why we loose regularity in the theorem as the dimension increases.

\begin{proof}[Proof of the Theorem]
We already know by Lemma \ref{lem:est1} that
\[ \|I_2(f)\|_{L^1(\partial_-SM)} \lesssim \|f\|^2_{L^2(M,\otimes^2_S T^* M)}\]
We recall that $N = \floor*{\frac{n+1}{2}}+1 > \frac{n+1}{2}$. We fix $q \in (1,2)$ close to $1$ and set $s = n\left(\frac{1}{q}-\frac{1}{2}\right)$, the exponent of the Sobolev injection $L^q \xhookrightarrow{} H^{-s}$. Interpolating $L^2$ between the Sobolev spaces $H^{-s-1/2}$ and $H^N$, we obtain for $\gamma = \frac{N}{s+1/2+N}$:
\[ \|I_2(f)\|_{L^1(\partial_-SM)} \lesssim \|f\|_{L^2}^2 \lesssim \|f\|_{H^{-s-1/2}}^{2\gamma} \|f\|_{H^N}^{2(1-\gamma)}  \lesssim \|f\|_{H^{-s-1/2}}^{2\gamma} \|f\|_{\mathcal{C}^N}^{2(1-\gamma)} \]
Moreover, by Lemma \ref{lem:est3}, we have that for $p > 1$ large enough and for $\delta > 0$ as small as wanted, $\|I_2(f)\|_{L^p(\partial_-SM)} \lesssim \|f\|_{L^{p+\delta}(M,\otimes^2_S T^* M)} \lesssim \|f\|_{L^\infty(M,\otimes^2_S T^* M)}$. By interpolation, we obtain that:
\[ \begin{split} \|I_2(f)\|_{L^{q+\delta}(\partial_-SM)} & \lesssim \|I_2(f)\|^{\theta}_{L^1}\|I_2(f)\|^{1-\theta}_{L^{p}} \\ & \lesssim \|f\|^{2\theta}_{L^2} \|f\|^{1-\theta}_{L^{\infty}} \\
& \lesssim  \|f\|_{H^{-s-1/2}}^{2\gamma \theta} \|f\|_{\mathcal{C}^N}^{2(1-\gamma)\theta} \|f\|^{1-\theta}_{L^{\infty}} ,\end{split}\]
where $\theta \in [0,1]$ satisfies 
\be \label{eq:th} \dfrac{1}{q + \delta} = \theta + \dfrac{1-\theta}{p}\ee

As a consequence, we obtain:
\[ \begin{array}{lll} \|f\|_{H^{-s-1/2}} & \lesssim \|\Pi_2^e E_0 f\|_{H^{-s+1/2}} & \text{ by Lemma } \ref{lem:ell} \\ 
& \lesssim \|{I^e}^*I^e_2E_0f\|_{H^{-s}}& \text{ by Lemma } \ref{lem:gego} \\
& \lesssim \|{I^e}^* I^e_2 E_0 f\|_{L^{q}} & \text{ by Sobolev injection } L^{q} \xhookrightarrow{} H^{-s}\\
& \lesssim \|I^e_2 E_0 f\|_{L^{q+\delta}}& \text{ by Lemma } \ref{lem:est3}\\
& \lesssim \|I_2 f\|_{L^{q+\delta}} & \text{ by Lemma } \ref{lem:est2}\\
& \lesssim \|f\|_{H^{-s-1/2}}^{2\gamma \theta} \|f\|_{\mathcal{C}^N}^{2(1-\gamma)\theta} \|f\|^{1-\theta}_{L^{\infty}}& \end{array} \]
Remark that we can choose $q$ as close we want to $1$, thus $s$ close enough to $n/2$ and $\theta$ close enough to $1/q$. In the limit $q=1,s=n/2,\hat{\theta}=1/q, \hat{\gamma}=\frac{N}{n/2+1/2+N}$, we have:
\[ 2\hat{\gamma}\hat{\theta} = \dfrac{2N}{n/2 + 1/2 + N} > 1, \]
since $N = \floor*{\frac{n+1}{2}}+1 > \frac{n+1}{2}$. As a consequence, we can always make some choice of constants $q,p,\delta$ which guarantees that $2\gamma\theta > 1$. Now, if $f$ were not zero, one would obtain:
\[ C \leq \|f\|_{H^{-s-1/2}}^{2\gamma \theta -1} \|f\|_{\mathcal{C}^N}^{2(1-\gamma)\theta} \|f\|^{1-\theta}_{L^{\infty}} \leq C' \eps^{\theta}, \]
for some constants $C$ and $C'$, independent of $f$, and we get a contradiction, provided $\eps$ is chosen small enough at the beginning.

As a consequence, for $g'$ smooth with same marked boundary distance and such that $\|g'-g\|_{\mathcal{C}^N} < \eps$, there exists a $\mathcal{C}^N$-diffeomorphism which preserves the boundary and such that $\phi^* g' = g$. Note that both $g$ and $g'$ are smooth : it is a classical fact that such an isometry $\phi$ is actually smooth.

\end{proof}

\bibliographystyle{alpha}
\bibliography{biblio}

\end{document}